\documentclass[11pt,reqno]{amsart}

\usepackage[T1]{fontenc}
\usepackage[utf8]{inputenc}
\usepackage[a4paper,margin=2.8cm]{geometry}
\usepackage{amsmath,amssymb,amsfonts,amsthm,mathtools}
\usepackage{enumitem}
\usepackage{microtype}
\usepackage[hidelinks,bookmarksdepth=3]{hyperref}
\usepackage{aliascnt}

\numberwithin{equation}{section}

\theoremstyle{plain}
\newtheorem{theorem}{Theorem}[section]

\newaliascnt{lemma}{theorem}
\newtheorem{lemma}[lemma]{Lemma}
\aliascntresetthe{lemma}

\newaliascnt{proposition}{theorem}
\newtheorem{proposition}[proposition]{Proposition}
\aliascntresetthe{proposition}

\newaliascnt{fact}{theorem}
\newtheorem{fact}[fact]{Fact}
\aliascntresetthe{fact}

\newaliascnt{claim}{theorem}
\newtheorem{claim}[claim]{Claim}
\aliascntresetthe{claim}

\newaliascnt{corollary}{theorem}
\newtheorem{corollary}[corollary]{Corollary}
\aliascntresetthe{corollary}

\theoremstyle{definition}
\newaliascnt{definition}{theorem}

\aliascntresetthe{definition}

\newaliascnt{remark}{theorem}

\aliascntresetthe{remark}
\usepackage{xcolor}
\usepackage[nameinlink,noabbrev]{cleveref}

\crefname{theorem}{Theorem}{Theorems}
\crefname{lemma}{Lemma}{Lemmas}
\crefname{claim}{Claim}{Claims}
\crefname{fact}{Fact}{Facts}
\crefname{proposition}{Proposition}{Propositions}
\crefname{corollary}{Corollary}{Corollaries}
\crefname{definition}{Definition}{Definitions}
\crefname{remark}{Remark}{Remarks}

\newcommand{\Prb}{\mathbb P}
\newcommand{\E}{\mathbb E}

\DeclareMathOperator{\ind}{ind}

\title[Embedding Induced Bounded Degree Graphs]{Embedding Induced Bounded Degree Graphs}

\author[G. Carenini]{Gaia Carenini}
\address{Trinity College Cambridge, Department of Pure Mathematics and Mathematical Statistics, Centre for Mathematical Sciences, Wilberforce Road, Cambridge CB3 0WA, United Kingdom}
\email{gc645@cam.ac.uk}
\date{\today}

\begin{document}
\begin{abstract}
We prove a sparse embedding theorem for induced embeddings of bounded-degree graphs.  The theorem applies to pairs $G\subseteq \Gamma$: the graph $G$ supplies the positive edges of the target graph, while the ambient graph $\Gamma$ supplies the induced constraints which must be avoided.  Its main feature is that these two types of constraints are kept separate throughout the embedding process.

As an application, we show that, for every fixed $\Delta,r\ge2$, there are constants $A,C>0$ such that every $n$-vertex graph $H$ with maximum degree at most $\Delta$ satisfies $r_{\ind}(H;r)\le C n^{\Delta+2}(\log n)^A$.
This improves the exponent in the polynomial bound of Conlon, Fox and Zhao for bounded-degree induced Ramsey numbers.  The proof combines the aforementioned embedding theorem with a sparse random transference argument, in which the random host is used only to certify robust deterministic hypotheses for every colour class.

\end{abstract}
\maketitle
\section{Introduction}
Sparse embedding theorems are now central in extremal and Ramsey combinatorics. Starting with the sparse regularity method of Kohayakawa and R\"odl~\cite{KR}, and continuing through the sparse embedding results of Conlon, Fox and Zhao~\cite{CFZ} and the sparse blow-up lemma of Allen, B\"ottcher, H\`an, Kohayakawa and Person~\cite{ABHKP}, a general principle has emerged: under suitable regularity and inheritance assumptions, sparse pairs behave enough like random bipartite graphs to support bounded-degree embeddings.

These results are aimed mainly at ordinary embeddings, where one must realise the edges of the target graph while keeping candidate sets large and well distributed. Induced embeddings add a second constraint: non-edges of the target must map to genuine non-edges of the ambient graph. This creates a difficulty already visible in a greedy embedding. Choosing the image of a vertex imposes positive adjacency constraints on future neighbours, a local effect controlled by sparse inheritance. But it may also delete candidates for many future non-neighbours, since any ambient edge to the chosen image would violate inducedness. These global deletions can accumulate over many steps and are not controlled by the usual sparse embedding machinery. Thus an induced embedding procedure must preserve enough positive structure for future neighbours while avoiding forbidden ambient edges to future non-neighbours.
  
Our main contribution is a deterministic embedding theorem that separates these two tasks: preserving the sparse neighbourhood structure needed for edges, and controlling the accumulated forbidden edges created by inducedness.
  We work with a pair of graphs
$G\subseteq \Gamma$
on the same vertex set.  The graph $G$ supplies the positive edges of the copy, while the ambient graph $\Gamma$ records the edges which must be avoided when two vertices of the target graph are non-adjacent.  Thus an acceptable embedding $\phi:V(H)\to V(\Gamma)$ is required to satisfy
$xy\in E(H)$ implies that $\phi(x)\phi(y)\in E(G)$, and
$xy\notin E(H)$ implies that $\phi(x)\phi(y)\notin E(\Gamma)$.
In particular, the copy is monochromatic inside $G$ but induced inside the larger graph $\Gamma$.

The deterministic theorem applies to bounded-degree graphs in sparse partite pairs $G\subseteq \Gamma$.  Its hypotheses divide naturally into two groups.  The graph $G$ is required to satisfy the usual lower-regularity and inheritance assumptions needed for a sparse bounded-degree embedding.  The ambient graph $\Gamma$ is required to satisfy pseudorandom conflict estimates, ensuring that the forbidden edges created by already embedded non-neighbours do not remove too many future candidates.  Once these hypotheses are verified, the conclusion is deterministic: every bounded-degree target graph with the prescribed partite structure has an acceptable embedding.

This separation is essential for Ramsey applications.  In an arbitrary colouring of a sparse host $\Gamma$, a single colour class $G$ must realise all edges of the desired copy.  However, the induced non-edge constraints are imposed by the whole host $\Gamma$, not by the chosen colour class alone.  Positive sparse regularity inside $G$ is therefore insufficient; one must also control the ambient edges of $\Gamma$ which may create forbidden conflicts.  The main theorem is designed precisely to handle this two-graph situation.

As an application of our embedding theorem, we improve the best known explicit polynomial bound for induced Ramsey numbers of bounded-degree graphs.  Recall that the induced Ramsey number $r_{\ind}(H;r)$ is the least integer $N$ for which there exists an $N$-vertex graph $R$ such that every $r$-colouring of $E(R)$ contains a monochromatic induced copy of $H$. The existence of induced Ramsey numbers was proved independently by Deuber~\cite{Deuber}, Erd\H{o}s, Hajnal and P\'osa~\cite{EHP}, and R\"odl~\cite{Rodl}.  For bounded-degree graphs, a conjecture often attributed to Trotter asserts that, for every fixed $\Delta$ and $r$, the induced Ramsey number of every $n$-vertex graph of maximum degree at most $\Delta$ is polynomial in $n$.  This was proved by \L uczak and R\"odl~\cite{LR}.  Subsequent work of Fox and Sudakov~\cite{FSinduced} and Conlon, Fox and Zhao~\cite{CFZ} gave substantially stronger quantitative bounds.  In particular, Conlon, Fox and Zhao proved that $r_{\ind}(H;r)\le C n^{2\Delta+8}$ for every $n$-vertex graph $H$ with $\Delta(H)\le \Delta$, where $C$ depends only on $\Delta$ and $r$. We prove the following improvement.

\begin{theorem}
\label{thm:main}
Fix $\Delta,r\ge2$.  There are constants $A,C>0$, depending only on
$\Delta$ and $r$, such that every $n$-vertex graph $H$ with
$\Delta(H)\le\Delta$ satisfies
$r_{\ind}(H;r)\le C n^{\Delta+2}(\log n)^A$.
\end{theorem}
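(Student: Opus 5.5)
We plan to prove \cref{thm:main} by taking the host $R$ to be a random partite graph and applying the deterministic embedding theorem described in the introduction to the pair $G\subseteq\Gamma$, where $\Gamma=R$ and $G$ is a suitable colour class. Since $\chi(H)\le\Delta+1$, we fix a proper colouring of $V(H)$ into $k=\Delta+1$ classes $X_1,\dots,X_k$. We then take $\Gamma$ to be the random $k$-partite graph with parts $V_1,\dots,V_{k}$, each of size $N/k$, in which every cross pair is an edge independently with probability $p$. For a pigeonhole over colours we will instead use $k'=k\cdot M(r)$ parts, where $M(r)$ is a Ramsey-type multiplicity chosen so that some colour is dense on many pairs of parts. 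We choose $p\approx (\log N)^{A_0} N^{-1/(\Delta+1)}$, or whatever sparsity the embedding theorem tolerates. With $N= Cn^{\Delta+2}(\log n)^A$, the number of vertices $N$ is then about $n$ times $p^{-\Delta-1}$, up to logarithmic factors. This is exactly the scale at which a greedy embedding with candidate sets of size roughly $p^{\Delta}N/k \gg n$ survives.

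The steps are as follows.

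\emph{Step 1, random host properties.} With high probability $\Gamma$ satisfies the following for all vertex sets of size at least $\varepsilon p^{\Delta}N$, together with all common neighbourhoods of up to $\Delta$ vertices:
\begin{enumerate}[label=(\roman*)]
\item sparse $(\varepsilon,p)$-regularity and one- and two-sided inheritance of regularity on pairs;
\item the ambient conflict estimates, namely that every vertex has few $\Gamma$-neighbours into each large candidate set, of order $pN$ with the correct density.
\end{enumerate}
These follow from Chernoff bounds and union bounds over sets, using the standard Kohayakawa--R\"odl counting of sets containing irregular pairs. The $\log$ factors in $N$ are what make the union bounds go through.

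\emph{Step 2, colour transference.} Given an $r$-colouring of $E(\Gamma)$, we apply the sparse regularity lemma to all colours simultaneously and pass to the reduced graph. By pigeonhole and Ramsey's theorem on the reduced multicoloured graph, we find one colour $c$ and clusters $U_1,\dots,U_k$, one inside each of $k$ distinct original parts, such that every pair $(U_i,U_j)$ is $(\varepsilon,p)$-lower-regular of density at least $p/r$ in colour $c$. We set $G$ to be colour $c$ restricted to these clusters. The inheritance hypotheses needed for $G$ then follow from the sparse inheritance results, namely the robust lower-regularity inheritance of Conlon--Fox--Zhao type, applied with the properties of $\Gamma$ from Step 1. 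This is the sense in which the random host certifies robust deterministic hypotheses for every colour class. The conflict hypotheses concern $\Gamma$ only, so they hold verbatim on the clusters.

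\emph{Step 3, embedding.} We apply the deterministic embedding theorem to $G\subseteq\Gamma[U_1\cup\dots\cup U_k]$ with $X_i\mapsto U_i$. This yields $\phi$ with edges of $H$ mapped into colour $c$ and non-edges mapped to non-edges of $\Gamma$. Non-edges of $H$ inside a class $X_i$ go inside $U_i$, which is independent in $\Gamma$, so the copy is monochromatic and induced in $R=\Gamma$.

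The main obstacle is Step 2 together with matching parameters. The inheritance conditions must hold for an adversarial colour class, and the proof must show that the sparsity $p$ allowed by the embedding theorem (candidate sets of size $p^{\Delta}|U_i|$ must exceed $n$ times polylog factors after conflict deletions) is compatible with regularity inheritance at that density. My first attempt would use only one-sided inheritance for the $\Delta$ back-neighbours and fold the remaining slack into the polylog factor $(\log n)^A$.
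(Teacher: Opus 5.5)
Your architecture matches the paper's. You use a random partite host, simultaneous sparse regularity for all colours plus Ramsey on the reduced graph to get a monochromatic lower-regular $(\Delta+1)$-partite block in distinct original parts, random inheritance uniform over all subgraphs $G\subseteq\Gamma$, and then the embedding theorem with $G$ the colour class and $\Gamma[V_1\cup\dots\cup V_k]$ as ambient graph. The gap is the parameter choice, which is the whole content of the exponent $\Delta+2$. You do not identify the two constraints that fix it.

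\emph{The upper bound on $p$ comes from inducedness.} An unembedded vertex can have $\Theta(n)$ embedded non-neighbours, and each of their images deletes about a $p$-fraction of its induced candidate set. So one needs $pn=O(1)$. With the extra $\log n$ lost in the potential argument, the theorem requires $(p+\xi)n\log(2n)\le\lambda_0$, which is why the paper takes $p=c_*/(n\log n)$.

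\emph{The lower bound on $M$ comes from inheritance.} Hypothesis (iii) must hold for all candidate-scale sets, down to size $\theta p^\Delta M$, uniformly over all subgraphs. The random inheritance lemmas of Allen--B\"ottcher--H\`an--Kohayakawa--Person apply only to sets of size at least of order $p^{-2}$; the paper asks for $p^\Delta M\ge Lp^{-2}\log M$. With $p\asymp 1/(n\log n)$ this gives $M\asymp n^{\Delta+2}(\log n)^A$ with $A\ge\Delta+3$.

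Your explicit choice $p\approx(\log N)^{A_0}N^{-1/(\Delta+1)}$ breaks this. With $N\approx n^{\Delta+2}$ it gives $p\approx n^{-1-1/(\Delta+1)}$, which contradicts your own $N\approx np^{-\Delta-1}$. Then $p^\Delta M\approx n^{1+1/(\Delta+1)}$ up to logarithms, far below $p^{-2}\approx n^{2+2/(\Delta+1)}$, so (iii) cannot be certified by these lemmas. Your heuristic that $p^\Delta N/k$ must exceed $n$ is only a lower-bound condition on $p$. It says nothing about how large $p$ may be, and it is not what produces $n^{\Delta+2}$ here. Relatedly, Step~2 has to use this random-graph inheritance rather than jumbledness-based (Conlon--Fox--Zhao type) inheritance. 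Since $\beta=\Omega(\sqrt{pN})$, jumbledness alone cannot control sets at the $p^{-2}$ scale when $\Delta\ge2$.

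Two further points.

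\textbf{The pointwise conflict bound is false.} Your Step~1(ii) asks for a bound on the $\Gamma$-neighbours of every vertex in every large candidate set. But $N_\Gamma(v;V_i)$ is itself a candidate-scale set, with witness $\{v\}$, lying entirely in the neighbourhood of $v$. What holds, and what the theorem assumes, is the averaged edge-load bound $e_\Gamma(S,S')\le C(p+\xi)|S||S'|$ for candidate-scale sets in distinct parts. This comes together with the weighted-overlap bound, with $\xi=L\log M/K\le p$. The embedding converts these into control of the accumulated deletions via Markov's inequality and the potential $\Phi_t$.

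\textbf{One-sided inheritance alone fails when $H$ contains a triangle $xyz$.} Embedding $x$ shrinks both candidate sets of the future edge $yz$ at once. Two one-sided applications do not compose: the second is applied to a pair that depends on the image $v$ of $x$, and $v$ may lie in its own exceptional set. The paper uses the two-sided inheritance lemma precisely at this point.
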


\subsection{The embedding theorem}

We now describe the main mechanism behind the deterministic embedding theorem.  During the greedy embedding, each unembedded vertex $y\in V(H)$ has two associated candidate sets.  The first, denoted $\widehat A_t(y)$, is the positive candidate set: it consists of vertices satisfying all positive adjacency constraints in $G$ imposed by already embedded neighbours of $y$.  The second, denoted $C_t(y)\subseteq \widehat A_t(y)$, is the induced candidate set: it is obtained by deleting those vertices which would create a forbidden ambient edge in $\Gamma$ to the image of an already embedded non-neighbour of $y$.

The positive candidate sets are controlled by sparse regularity inheritance.  The new issue is to control the cumulative loss from the induced deletions.  We do this by introducing a potential function which measures, simultaneously over all unembedded vertices, the discrepancy between the positive and induced candidate scales:
$$
\Phi_t
=
\sum_y
\left(
\Lambda^{-a_t(y)}
\frac{|\widehat A_t(y)|}{|C_t(y)|}
\right)^\tau .
$$
Here $a_t(y)$ is the number of already embedded neighbours of $y$, and the factor $\Lambda^{-a_t(y)}$ normalises the expected positive candidate scale.  The conflict estimates for $\Gamma$ imply that, at each embedding step, the average increase of this potential is small.  It follows that the induced candidate sets remain comparable to the positive candidate sets throughout the greedy phase.

Once almost all vertices have been embedded, we leave a separated buffer.  The separation ensures that the remaining positive and induced constraints have a tractable dependence structure.  We then embed the buffer using a sampling argument together with Haxell's independent transversal theorem \cite{Haxell}.  This completes the deterministic acceptable embedding.

\subsection{The Ramsey application}

To prove Theorem~\ref{thm:main}, we verify the deterministic hypotheses inside a sparse random partite host.  We take a random $(\Delta+1)$-partite graph $\Gamma$ with edge probability
$p\asymp \frac{1}{n\log n}$
and parts of size
$M\asymp n^{\Delta+2}(\log n)^A$.
With high probability, this graph satisfies the required inheritance and conflict estimates.

We then consider an arbitrary $r$-colouring of $E(\Gamma)$.  A sparse colour-focusing lemma produces a monochromatic lower-regular $(\Delta+1)$-partite block.  Taking $G$ to be the corresponding colour class and retaining $\Gamma$ as the ambient graph, the deterministic embedding theorem gives a monochromatic copy of $H$ which is induced in $\Gamma$.  Since the host has $O(M)$ vertices in each part, this yields $r_{\ind}(H;r)\le C n^{\Delta+2}(\log n)^A$. 

\subsection{Related work}

The proof belongs to the sparse regularity and sparse blow-up tradition.  Sparse regularity originates in the work of Kohayakawa and R\"odl~\cite{KR}; sparse counting and embedding methods were developed further by Conlon, Fox and Zhao~\cite{CFZ}; and the inheritance phenomena used here are closely related to those appearing in the sparse blow-up lemma of Allen, B\"ottcher, H\`an, Kohayakawa and Person~\cite{ABHKP}.  The main novelty is that the graph providing the positive edges and the graph imposing the induced constraints are allowed to be different.  This leads to a conflict-stable sparse embedding framework for induced copies.

The Ramsey application continues the line of quantitative induced Ramsey bounds for bounded-degree graphs initiated by \L uczak and R\"odl~\cite{LR} and developed by Fox and Sudakov~\cite{FSinduced} and Conlon, Fox and Zhao~\cite{CFZ}.  Theorem~\ref{thm:main} lowers the exponent in the Conlon--Fox--Zhao bound from $2\Delta+8$ to $\Delta+2$, up to a polylogarithmic factor.

The result is also adjacent to induced size-Ramsey theory and induced embedding in sparse hosts.  Gir\~ao and Hurley~\cite{GiraoHurley} proved an induced analogue of the Friedman--Pippenger theorem for bounded-degree trees in sparse expanders.  Hunter and Sudakov~\cite{HunterSudakov} proved linear induced size-Ramsey bounds for bounded-degree trees and, more generally, bounded-treewidth graphs.  These results are complementary to ours: they concern linear size-Ramsey bounds for tree-like families, whereas the present application concerns vertex induced Ramsey bounds for arbitrary bounded-degree graphs.

\subsection{Organisation of the paper}

Section~\ref{sec:preliminaries} introduces the notation and auxiliary tools used throughout the paper.  Section~\ref{sec:embedding} proves the deterministic conflict-stable embedding theorem.  Section~\ref{sec:random-host} verifies its hypotheses in the sparse random partite host.  Section~\ref{sec:ramsey-application} deduces Theorem~\ref{thm:main}.

\section*{Acknowledgements}
The author is grateful to Jacob Fox for bringing this problem to her attention. The author is also
grateful to Imre Leader for his guidance and support. This work was supported
by the CB European PhD Studentship funded by Trinity College, Cambridge.

\section{Preliminaries}
\label{sec:preliminaries}

All graphs in the paper are finite and simple.  If $F$ is a graph and
$X,Y\subseteq V(F)$, we write $e_F(X,Y)$ for the number of edges with one
endpoint in $X$ and one endpoint in $Y$.  Whenever this notation is used,
the sets $X$ and $Y$ will be disjoint. All partite graphs in the paper are understood to have no edges inside parts. For a vertex $v$ and a set
$X$, write $N_F(v;X)=N_F(v)\cap X$. 
If $W$ is a set of vertices, write
$N_F(W;X)=X\cap \bigcap_{w\in W}N_F(w)$,
with the convention $N_F(\varnothing;X)=X$.

\subsection*{Partite induced embeddings}

We shall work with pairs of graphs $G\subseteq\Gamma$ on the same vertex set.
The graph $G$ carries the positive edges of the target graph, while
$\Gamma$ is the ambient graph in which the copy is required to be induced.
Thus an injective map $\phi:V(H)\to V(\Gamma)$ is \emph{acceptable} if, for all distinct $x,y\in V(H)$,
$xy\in E(H)$ implies that $\phi(x)\phi(y)\in E(G)$, and $xy\notin E(H)$ implies that $\phi(x)\phi(y)\notin E(\Gamma)$.

We use a partite formulation.  The host graph $\Gamma$ has parts
$V_1,\ldots,V_k$, and the target graph $H$ is equipped with a proper
assignment $\chi:V(H)\to[k]$.  An embedding $\phi$ is \emph{part-respecting} if
$\phi(x)\in V_{\chi(x)}$ for every $x\in V(H)$.
A set $B\subseteq V(H)$ is a \emph{separated buffer} if distinct vertices of $B$ have distance at least three in $H$.  
\subsection*{Sparse regularity}

Let $0<\varepsilon,d,p\le1$.  A disjoint pair $(X,Y)$ is
$(\varepsilon,d,p)$-\emph{lower-regular} in $G$ if every
$X'\subseteq X$, $Y'\subseteq Y$ with $|X'|\ge\varepsilon |X|$, and $|Y'|\ge\varepsilon |Y|$ satisfies
$e_G(X',Y')\ge (d-\varepsilon)p|X'||Y'|$.
We shall use the following elementary slicing fact.

\begin{fact}[Slicing]
\label{fact:slicing}
Let $0<\varepsilon,d,p,\rho\le1$, and suppose that $(X,Y)$ is
$(\varepsilon,d,p)$-lower-regular in $G$.  If $X'\subseteq X$ and
$Y'\subseteq Y$ satisfy
$|X'|\ge\rho |X|$ and $|Y'|\ge\rho |Y|$, then $(X',Y')$ is $(\varepsilon/\rho,d,p)$-lower-regular in $G$.
\end{fact}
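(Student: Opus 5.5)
The plan is to check the definition of $(\varepsilon/\rho,d,p)$-lower-regularity for $(X',Y')$ directly, by pulling the required density estimate back to the pair $(X,Y)$. So fix subsets $X''\subseteq X'$ and $Y''\subseteq Y'$ with $|X''|\ge(\varepsilon/\rho)|X'|$ and $|Y''|\ge(\varepsilon/\rho)|Y'|$. The goal is to show
\[
e_G(X'',Y'')\ge \bigl(d-\varepsilon/\rho\bigr)p|X''||Y''|.
\]

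The key step compares sizes with the original parts. Using $|X'|\ge\rho|X|$ we get
\[
|X''|\ge(\varepsilon/\rho)|X'|\ge(\varepsilon/\rho)\rho|X|=\varepsilon|X|,
\]
and in the same way $|Y''|\ge\varepsilon|Y|$. Since $X''\subseteq X$ and $Y''\subseteq Y$, the lower-regularity of $(X,Y)$ applies to the pair $(X'',Y'')$. It gives
\[
e_G(X'',Y'')\ge(d-\varepsilon)p|X''||Y''|.
\]

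It remains to weaken the constant. Because $\rho\le1$ we have $\varepsilon/\rho\ge\varepsilon$, so $d-\varepsilon\ge d-\varepsilon/\rho$. Together with $p|X''||Y''|\ge0$ this yields the required inequality. The pair $(X',Y')$ is disjoint because $(X,Y)$ is.

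One formal point needs care: the fact states its parameters in $(0,1]$, but $\varepsilon/\rho$ may exceed $1$. In that case no nonempty $X''\subseteq X'$ can satisfy $|X''|\ge(\varepsilon/\rho)|X'|>|X'|$, so the condition holds vacuously. (If $X'$ is empty, every term is zero and there is nothing to prove.) Apart from this remark there is no real obstacle; the argument is the one-line size comparison $(\varepsilon/\rho)\cdot\rho=\varepsilon$.
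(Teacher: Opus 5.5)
Your proof is correct and follows the paper's argument: subsets of $X'$ and $Y'$ of relative size at least $\varepsilon/\rho$ have relative size at least $\varepsilon$ in $X$ and $Y$, so the lower-regularity of $(X,Y)$ applies directly. You also spell out the weakening $d-\varepsilon\ge d-\varepsilon/\rho$ and the vacuous case $\varepsilon/\rho>1$, both of which the paper leaves implicit.
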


\begin{proof}
If $X''\subseteq X'$ and $Y''\subseteq Y'$ have
$|X''|\ge(\varepsilon/\rho)|X'|$ and $|Y''|\ge(\varepsilon/\rho)|Y'|$, 
then $|X''|\ge\varepsilon |X|$ and $|Y''|\ge\varepsilon |Y|$.  The
lower-regularity of $(X,Y)$ applies.
\end{proof}

\subsection*{Candidate-scale sets and conflict-good hosts}

Let $\Gamma$ be a $k$-partite graph with parts $V_1,\ldots,V_k$, each of
order comparable to $M$, and let $\Delta\ge1$.  We write $K=p^\Delta M$.
Given $0<\theta\le1$, a set $S\subseteq V_i$ is called a
$(\Delta,\theta)$-\emph{candidate-scale set}, or simply a \emph{candidate-scale set}, with
witness $W$, if $W\subseteq V(\Gamma)\setminus V_i$, $|W|\le\Delta$, $S\subseteq N_\Gamma(W;V_i)$, and $|S|\ge \theta p^{|W|}|V_i|$.

We say that $\Gamma$ is
$(\Delta,\theta,\xi,C)$-\emph{conflict-good at scale $M$} if the following
two estimates hold. First, if
$S\subseteq N_\Gamma(W;V_i)$ and
$S'\subseteq N_\Gamma(W';V_j)$ are candidate-scale sets in distinct parts,
then
\begin{equation}
        e_\Gamma(S,S')\le C(p+\xi)|S||S'|.
        \label{eq:edge-load-good}
\end{equation}

Second, the following weighted overlap estimate holds. Let $x$ be a label
with part $i$, and let $Y$ be a finite set of labels equipped with a part
map $\chi:Y\to[k]$ \footnote{The labels should be thought of as unembedded vertices of the target graph, with $\chi$ recording their parts.}. Let $W_x$ and $W_y$, $y\in Y$, be witness sets
of size at most $\Delta$, and suppose that each vertex appears in at most
$\Delta$ of the sets $W_y$.  If
$S_x\subseteq N_\Gamma(W_x;V_i)$ and
$S_y\subseteq N_\Gamma(W_y;V_{\chi(y)})$ are candidate-scale sets, then,
for all non-negative weights $(\omega_y)_{y\in Y}$,
\begin{equation}
        \sum_{\substack{y\in Y\\ \chi(y)=i}}
        \omega_y
        \frac{|S_x\cap S_y|}{|S_x||S_y|}
        \le
        \xi \sum_{\substack{y\in Y\\ \chi(y)=i}}\omega_y .
        \label{eq:weighted-overlap-good}
\end{equation}

\subsection*{Envelope-positive inheritance}

The sparse inheritance input available for random graphs is naturally stated
for ambient neighbourhoods in $\Gamma$, whereas the greedy embedding uses
positive neighbourhoods in the subgraph $G$.  We therefore use an inheritance
hypothesis with an explicit regularity hierarchy.

Let $G\subseteq\Gamma$ be $k$-partite graphs with common parts
$V_1,\ldots,V_k$.  Let
$\boldsymbol\varepsilon=(\varepsilon_0,\ldots,\varepsilon_{2\Delta})$ and $\boldsymbol\eta=(\eta_0,\ldots,\eta_{2\Delta-1})$ be positive parameters.  We say that $G\subseteq\Gamma$ is
$(\Delta,\theta,\boldsymbol\varepsilon,\boldsymbol\eta,d,p)$-\emph{envelope-positively inheriting at
scale $M$} if the following hold whenever $0\le h<2\Delta$ and
$S_i\subseteq V_i$, $S_j\subseteq V_j$, and $S_\ell\subseteq V_\ell$ are
candidate-scale sets. First, if $i\ne j$ and $(S_i,S_j)$ is
$(\varepsilon_h,d,p)$-lower-regular in $G$, then all but at most
$\varepsilon_h |S_i|$ vertices $v\in S_i$ satisfy
\begin{equation}
        |N_G(v;S_j)|\ge \frac d2 p|S_j|.
        \label{eq:inherit-degree}
\end{equation}
Second, if $i\ne j$, $j\ne\ell$, and
$(S_j,S_\ell)$ is $(\varepsilon_h,d,p)$-lower-regular in $G$, then all but at most
$\varepsilon_h |S_i|$ vertices $v\in S_i$ are such that
$(N_\Gamma(v;S_j),S_\ell)$ is
$(\eta_h,d,p)$-lower-regular in $G$.
Third, if $i,j,\ell$ are distinct and
$(S_j,S_\ell)$ is $(\varepsilon_h,d,p)$-lower-regular in $G$, then all but at most
$\varepsilon_h |S_i|$ vertices $v\in S_i$ are such that
$(N_\Gamma(v;S_j),N_\Gamma(v;S_\ell))$ is
$(\eta_h,d,p)$-lower-regular in $G$.

\subsection*{Auxiliary tools}

We shall use the following theorem of Haxell.

\begin{theorem}[Haxell~\cite{Haxell}]
\label{thm:haxell}
Let $F$ be a graph whose vertex set is partitioned into sets
$L_1,\ldots,L_m$.  Suppose that for every $I\subseteq[m]$, the induced
subgraph
$F\left[\bigcup_{i\in I}L_i\right]
$
has domination number at least $2|I|-1$.  Then $F$ has an independent
transversal; that is, there are vertices $v_i\in L_i$, $i\in[m]$, such that
$\{v_1,\ldots,v_m\}$ is independent in $F$.
\end{theorem}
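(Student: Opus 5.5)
This is Haxell's theorem, which the paper cites and does not prove. The natural route is Haxell's original alternating-tree (augmenting-path) argument, which I would reproduce as follows.

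\emph{Setup.} We may assume each $L_i$ is minimal, i.e.\ deleting any vertex of $L_i$ destroys the domination hypothesis. This is harmless, since deleting vertices only makes finding an independent transversal harder and we keep the hypothesis. Then argue by induction on $m$. The case $m=1$ is trivial, because $L_1\neq\varnothing$ when its domination number is at least $1$. For the step, the induction hypothesis, applied to $L_1,\ldots,L_{m-1}$ (the condition is hereditary in $I$), gives a partial independent transversal $R$ meeting every class except $L_m$. Among all such partial transversals we fix a choice minimising a suitable potential, described below, and aim to extend it to $L_m$, or else contradict the hypothesis.

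\emph{Alternating tree.} We build a sequence of vertices $x_1,x_2,\ldots$ (``blue'') together with their ``red'' neighbours in $R$.
\begin{enumerate}
\item Initialise $I_0=\{m\}$ and $B=\varnothing$. Let $R_0$ be the set of representatives of $R$ in the classes indexed by $I_0$ (initially empty).
\item At stage $s$, let $I_s$ be the current set of classes and let $B_s\cup R_s$ be the current blue and red vertices. Then $|B_s\cup R_s|\le 2|I_s|-2$.
\item Since $F[\bigcup_{i\in I_s}L_i]$ has domination number at least $2|I_s|-1$, the set $B_s\cup R_s$ does not dominate it. So there is a vertex $x_{s+1}\in\bigcup_{i\in I_s}L_i$ that is neither in, nor adjacent to, $B_s\cup R_s$.
\item If $x_{s+1}$ has no neighbour in $R$, we augment. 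Replace the representative of the class of $x_{s+1}$ by $x_{s+1}$, then cascade back along the tree: each blue vertex displaces the red vertex it was attached to, until $L_m$ receives a representative. This yields an independent transversal.
\item Otherwise $x_{s+1}$ has a neighbour $y\in R$ lying in a class outside $I_s$; neighbours inside $I_s$ are excluded by the choice of $x_{s+1}$. We add $x_{s+1}$ to $B$, add $y$ to $R$, and add the class of $y$ to $I_s$. This keeps the count $|B\cup R|\le 2|I|-2$.
\end{enumerate}
Since $I$ strictly grows and $m$ is finite, the process cannot continue forever, so an augmentation must eventually occur.

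\emph{Main obstacle.} The delicate step is verifying that the cascading swap in item 4 really produces an \emph{independent} set. A blue vertex $x_j$ chosen early may be adjacent to red vertices $y$ that were added to $R$ later. After the swap, such a $y$ may still be present, because it was not displaced, and it would then be adjacent to $x_j$.

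Haxell handles this with a minimality or ordering device, which I would adopt. When choosing $x_{s+1}$, I would first try to arrange that every blue vertex has exactly one neighbour in $R$. Failing that, I would choose $R$, and the blue vertices, to be lexicographically minimal with respect to the sequence (number of classes reached at each stage). A non-independent swap would then produce a new partial transversal with a lexicographically smaller tree profile, contradicting minimality. Concretely, when a conflict arises, we truncate the tree at the first blue vertex with an extra red neighbour and restart the construction from the modified transversal. The termination measure strictly decreases, and this is the step I expect to require the most care.
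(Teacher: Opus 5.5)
\paragraph{A gap in the swap and termination step.} The paper does not prove this statement; it only quotes Haxell. So your plan has to stand on its own. Your growth phase is sound. At stage $s$ you have $|I_s|=s+1$ and $|B_s|=|R_s|=s$, and non-domination yields $x_{s+1}$ outside $B_s\cup R_s$ and non-adjacent to it. Any neighbour of $x_{s+1}$ in the partial transversal then lies in a new class, and once $I_s=[m]$ the new vertex must be free.

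The gap is exactly the step you flag, and the device you propose does not close it.
\begin{itemize}
\item The profile you want to minimise, the ``number of classes reached at each stage'', equals $s+1$ at stage $s$ for every tree. It never changes, so it cannot measure progress.
\item A ``non-independent swap'' does not produce a partial independent transversal, so there is nothing to compare with a minimal choice.
\item You cannot arrange in advance that each blue vertex has a unique neighbour in the transversal.
\item In your cascade, a released blue vertex must displace the representative of its own class. The red vertex it was attached to has already been displaced.
\end{itemize}

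\paragraph{The missing idea.} Swap one level at a time, and use the degrees of the blue vertices into the current transversal as the potential.

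Suppose $x=x_{s+1}$ is free and lies in the class of $y_j$, the red neighbour of $x_j$. Replace $R$ by $R'=(R\setminus\{y_j\})\cup\{x\}$. This is independent because $x$ has no neighbour in $R$. Keep $x_1,\ldots,x_j$ and $y_1,\ldots,y_{j-1}$, and discard the rest of the tree.

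The class of $y_j$ is not in $I_{j-1}$, so $R'$ agrees with $R$ on all classes of $I_{j-1}$. Hence the truncated tree still satisfies its non-adjacency conditions, and any neighbour of $x_j$ in $R'$ still lies outside $I_{j-1}$.

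Since $x$ is non-adjacent to all earlier blue vertices, $|N_F(x_l)\cap R'|\le |N_F(x_l)\cap R|$ for every $l\le j$. The inequality is strict for $l=j$, because $y_j$ has left.

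Now one of three things happens. If $x_j$ has a neighbour in $R'$, that neighbour becomes the new $y_j$ and growth resumes. If $x_j$ is free, swap again at the level of its class. If $x_j\in L_m$, you are done.

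Record the vector $(|N_F(x_1)\cap R|,|N_F(x_2)\cap R|,\ldots)$ at successive swaps. It strictly decreases lexicographically within a finite set, and each growth phase has length at most $m$. So the procedure must stop. The only way it stops is by adding a vertex of $L_m$, which gives the independent transversal.
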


In particular, we are interested in the following consequence of the latter.

\begin{corollary}
\label{cor:haxell-degree}
Let $F$ be a graph whose vertex set is partitioned into sets
$L_1,\ldots,L_m$.  If $|L_i|\ge2D$ for every $i$, and
$\Delta(F)\le D-1$, then $F$ has an independent transversal.
\end{corollary}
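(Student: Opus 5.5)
We derive Corollary~\ref{cor:haxell-degree} from Theorem~\ref{thm:haxell} by checking the domination hypothesis directly. Fix $I\subseteq[m]$ and set $U=\bigcup_{i\in I}L_i$. We must show that every dominating set $T$ of $F[U]$ satisfies $|T|\ge 2|I|-1$.

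The argument is a counting bound. Each vertex $t\in T$ dominates itself and its neighbours in $F[U]$. Since $\Delta(F)\le D-1$, this is at most $1+(D-1)=D$ vertices of $U$. Because $T$ dominates all of $U$, we get
\[
|U|\le \sum_{t\in T}\bigl(1+\deg_{F[U]}(t)\bigr)\le D|T|.
\]
The parts $L_i$ are pairwise disjoint and each has $|L_i|\ge 2D$, so $|U|\ge 2D|I|$. Combining the two bounds gives $D|T|\ge 2D|I|$, that is, $|T|\ge 2|I|\ge 2|I|-1$.

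Hence the domination number of $F[U]$ is at least $2|I|-1$ for every $I$. For $I=\varnothing$ the condition reads $\ge -1$ and holds trivially. Theorem~\ref{thm:haxell} then yields an independent transversal.

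No step here is a genuine obstacle. The only points needing care are the sign conventions: the closed neighbourhood of a vertex has size at most $D$ rather than $D-1$, and the degree is taken in $F[U]$, which is at most the degree in $F$. This is exactly why the hypothesis $|L_i|\ge 2D$ is phrased with $D$ and the degree bound as $D-1$.
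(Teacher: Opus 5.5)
Your proof is correct and follows the paper's argument exactly. The union $\bigcup_{i\in I}L_i$ has at least $2D|I|$ vertices and each closed neighbourhood has at most $D$ of them, so every dominating set has size at least $2|I|\ge 2|I|-1$, and Haxell's theorem applies; you just make explicit the closed-neighbourhood count that the paper leaves implicit.
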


\begin{proof}
For $I\subseteq[m]$, the subgraph induced by $\bigcup_{i\in I}L_i$ has at
least $2D|I|$ vertices and maximum degree at most $D-1$.  Hence any
dominating set has size at least $2|I|$, and in particular at least
$2|I|-1$.  The result follows from \cref{thm:haxell}.
\end{proof}

We shall use the following simple sampling estimate.  Its short proof is
included for completeness.

\begin{fact}[Large-deviation bound for independent uniform samples]
\label{fact:large-deviation}
Let $\Omega_1,\ldots,\Omega_m$ be finite sets, and for each $i$ let
$S_i$ be a uniformly random $s_i$-element subset of $\Omega_i$.  Assume
that the random sets $S_1,\ldots,S_m$ are chosen independently.  Let
$A_i\subseteq \Omega_i$, and define $X=\sum_{i=1}^m |S_i\cap A_i|$. Let $\mu=\E X$.  Then, for every real $T\ge e\mu$,
$$
        \Prb(X\ge T)\le \left(\frac{e\mu}{T}\right)^T .
$$
\end{fact}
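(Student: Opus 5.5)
We prove \cref{fact:large-deviation} by the exponential moment method; it is the standard Chernoff bound $\Prb(X\ge T)\le (e\mu/T)^T$ for sums of independent indicators, transferred to samples without replacement. Write $X_i=|S_i\cap A_i|$, so $X=\sum_i X_i$ with the $X_i$ independent, and $\E X_i=s_i|A_i|/|\Omega_i|$. For any $\lambda>0$, Markov's inequality gives
$$
\Prb(X\ge T)\le e^{-\lambda T}\,\E e^{\lambda X}=e^{-\lambda T}\prod_{i=1}^m \E e^{\lambda X_i}.
$$

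The key step is to bound each factor $\E e^{\lambda X_i}$. Here $X_i$ is hypergeometric: $s_i$ draws without replacement from $\Omega_i$, of which $|A_i|$ elements are marked. We use Hoeffding's classical comparison, which says that for every convex function $f$ one has $\E f(X_i)\le \E f(Y_i)$, where $Y_i\sim\mathrm{Bin}(s_i,|A_i|/|\Omega_i|)$ counts marked elements when drawing with replacement. The comparison follows since the without-replacement sum is a conditional expectation of the with-replacement sum (via a random injective relabelling), combined with Jensen's inequality. Applying it with $f(z)=e^{\lambda z}$ gives
$$
\E e^{\lambda X_i}\le (1+q_i(e^\lambda-1))^{s_i}\le \exp\bigl(s_iq_i(e^\lambda-1)\bigr),\qquad q_i=|A_i|/|\Omega_i|,
$$
using $1+u\le e^u$. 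Multiplying over $i$ yields $\E e^{\lambda X}\le \exp(\mu(e^\lambda-1))$. If one wants to avoid citing Hoeffding, an alternative is to show directly that the indicators $\mathbf 1[a\in S_i]$, $a\in A_i$, are negatively associated. Then $\E\prod_a e^{\lambda \mathbf 1[a\in S_i]}\le\prod_a \E e^{\lambda\mathbf 1[a\in S_i]}$, which gives the same bound with each of the $|A_i|$ indicators being Bernoulli$(s_i/|\Omega_i|)$.

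To finish, assume $\mu>0$; otherwise $X=0$ almost surely and the claim is trivial, since $T>0$ when $\mu>0$ is excluded, with the convention $0^T=0$ for $T>0$. Choose $\lambda=\log(T/\mu)$, which is at least $1>0$ because $T\ge e\mu$. This gives
$$
\Prb(X\ge T)\le \exp\bigl(-T\log(T/\mu)+T-\mu\bigr)\le \left(\frac{e\mu}{T}\right)^T,
$$
dropping the favourable factor $e^{-\mu}$. The only step that is not routine is the comparison of the hypergeometric moment generating function with the binomial one. We expect this to be the main, if mild, obstacle, and it is settled either by Hoeffding's theorem or by the negative association argument.
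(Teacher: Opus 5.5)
Your proof is correct, but it follows a different route from the paper. The paper never uses the moment generating function. It bounds factorial moments directly, $\E\binom Xh\le \mu^h/h!$, using only the elementary estimate $\Prb(R\subseteq \bigsqcup_i S_i)\le\prod_{u\in R}q_u$, which comes from $(s_i)_{r_i}/(|\Omega_i|)_{r_i}\le (s_i/|\Omega_i|)^{r_i}$ and independence across $i$. It then applies Markov's inequality to $\binom Xh\ge 1$ with $h=\lceil T\rceil$ and uses $h!\ge(h/e)^h$. Finally, the monotonicity of $x\mapsto(e\mu/x)^x$ on $x\ge\mu$ lets it pass from $h$ back to $T$.

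Your exponential-moment argument instead hands the only non-trivial step to Hoeffding's convex comparison or to negative association, both genuine external theorems. In exchange you get the slightly sharper bound $e^{-\mu}(e\mu/T)^T$ and avoid the rounding and monotonicity bookkeeping.

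Your ``main obstacle'' is in fact settled by the very inequality the paper uses. Since $e^{\lambda\mathbf 1[a\in S_i]}=1+(e^\lambda-1)\mathbf 1[a\in S_i]$ and $e^\lambda-1\ge 0$, expanding the product gives
$$
\E\prod_{a\in A_i}e^{\lambda\mathbf 1[a\in S_i]}=\sum_{R\subseteq A_i}(e^\lambda-1)^{|R|}\Prb(R\subseteq S_i)\le\prod_{a\in A_i}\Bigl(1+(e^\lambda-1)\tfrac{s_i}{|\Omega_i|}\Bigr).
$$
So neither Hoeffding's theorem nor negative association is needed, and the two proofs become the generating-function and coefficient-wise versions of the same computation.

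A minor point: your sentence about the case $\mu=0$ is garbled. What you need is simply that for $\mu=0$ and $T>0$ both sides vanish. The case $T=\mu=0$ is meaningless in the statement itself, for the paper as well.
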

\begin{proof}
 For $u\in A_i$, write
$q_u=s_i/|\Omega_i|$.  Then
$\mu=\sum_{i=1}^m \sum_{u\in A_i} q_u $. For an integer $h\ge1$, we first bound $\E\binom Xh$.  Let
$A=\bigsqcup_{i=1}^m A_i$, where the disjoint union remembers the index $i$.
For a set $R\subseteq A$, write $R_i=R\cap A_i$ and $r_i=|R_i|$.  Then
$$
        \Prb(R_i\subseteq S_i)
        =
        \frac{(s_i)_{r_i}}{(|\Omega_i|)_{r_i}}
        \le
        \left(\frac{s_i}{|\Omega_i|}\right)^{r_i},
$$
where $(a)_b=a(a-1)\cdots(a-b+1)$.  Since the samples $S_i$ are
independent, for every $R\subseteq A$ of size $h$,
$$
        \Prb(R\subseteq \bigsqcup_{i=1}^m S_i)
        \le
        \prod_{u\in R} q_u .
$$
Therefore
$$
        \E\binom Xh
        =
        \sum_{\substack{R\subseteq A\\ |R|=h}}
        \Prb(R\subseteq \bigsqcup_{i=1}^m S_i)
        \le
        \sum_{\substack{R\subseteq A\\ |R|=h}}
        \prod_{u\in R}q_u
        \le
        \frac1{h!}\left(\sum_{u\in A}q_u\right)^h
        =
        \frac{\mu^h}{h!}.
$$
If $X\ge h$, then $\binom Xh\ge1$.  Hence
$$
        \Prb(X\ge h)
        \le
        \E\binom Xh
        \le
        \frac{\mu^h}{h!}
        \le
        \left(\frac{e\mu}{h}\right)^h .
$$
Now let $T\ge e\mu$ be real and put $h=\lceil T\rceil$.  Then
$$
        \Prb(X\ge T)=\Prb(X\ge h)
        \le
        \left(\frac{e\mu}{h}\right)^h .
$$
The function $x\mapsto (e\mu/x)^x$ is decreasing for $x\ge \mu$, and
$h\ge T\ge e\mu\ge \mu$.  Therefore
$$
        \left(\frac{e\mu}{h}\right)^h
        \le
        \left(\frac{e\mu}{T}\right)^T ,
$$
which proves the claim.
\end{proof}

\section{Embedding theorem for bounded-degree graphs}\label{sec:embedding}
This section contains the deterministic core of the paper.  We prove an
embedding theorem for sparse partite pairs $G\subseteq \Gamma$, where $G$
is responsible for the positive adjacency constraints and $\Gamma$ is the
ambient graph in which all non-edges of the target must remain non-edges.  The
hypotheses are correspondingly divided into positive assumptions on $G$, in
the form of lower-regularity and inheritance, and conflict assumptions on
$\Gamma$, which control the deletions forced by inducedness.  After stating
the theorem, we prove it by a greedy algorithm which maintains positive
candidate sets and induced candidate sets separately; a potential function
prevents the latter from shrinking too much, and a final separated buffer is
embedded using Haxell's independent transversal theorem.
\begin{theorem}[Embedding theorem for bounded-degree graphs]
\label{thm:conflict-stable-embedding}
Fix $\Delta,k\ge 1$, $0<d\le 1$, and $\kappa,C\ge 1$.
There exist positive constants
$\theta,\xi_0,\lambda_0,K_0,
$
an integer $n_0$, and vectors
$\boldsymbol\varepsilon=(\varepsilon_0,\ldots,\varepsilon_{2\Delta})$, $\boldsymbol\eta=(\eta_0,\ldots,\eta_{2\Delta-1})$,
such that the following holds.

Let $G\subseteq \Gamma$ be $k$-partite graphs with parts
$V_1,\ldots,V_k$, where
$M\le |V_i|\le \kappa M$
for every $i\in[k]$.
Let $0<p\le 1$, and write
$K=p^\Delta M$,
and let $H$ be an $n$-vertex graph with $n\ge n_0$,
$\Delta(H)\le \Delta$, and a proper assignment
$\chi:V(H)\to[k]$ satisfying
$|\chi^{-1}(i)|\le |V_i|$
for every $i\in[k]$.
Suppose that $H$ has a separated buffer $B$. Assume also that
$$
  K\ge K_0\log(2n),\qquad
  (p+\xi)n\log(2n)\le \lambda_0,\qquad
  0<\xi\le \min\{\xi_0,p\},
$$
and that the following conditions hold.
\begin{enumerate}
  \item[\textup{(i)}]
  $\Gamma$ is $(\Delta,\theta,\xi,C)$-conflict-good at scale $M$.

  \item[\textup{(ii)}]
  Every cross-pair $(V_i,V_j)$, $i\ne j$, is
  $(\varepsilon_0,d,p)$-lower-regular in $G$.

  \item[\textup{(iii)}]
  $G\subseteq \Gamma$ is
  $(\Delta,\theta,\boldsymbol\varepsilon,\boldsymbol\eta,d,p)$-envelope-positively
  inheriting at scale $M$.
\end{enumerate}
Then there exists a part-respecting injective map
$\phi:V(H)\to V(\Gamma)$
which is an acceptable embedding.
\end{theorem}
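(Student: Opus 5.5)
We follow the outline from the introduction: a greedy phase over $V(H)\setminus B$, tracked by a potential function, and then an independent-transversal step for the buffer. Order $V(H)\setminus B$ as $x_1,\ldots,x_{n'}$. Throughout we maintain, for every unembedded $y$, a positive candidate set $\widehat A_t(y)=N_G(\phi(N^-_t(y));V_{\chi(y)})$, where $N^-_t(y)$ is the set of embedded neighbours of $y$, and $a_t(y)=|N^-_t(y)|\le\Delta$. We also maintain an induced candidate set $C_t(y)\subseteq\widehat A_t(y)$, obtained by deleting the $\Gamma$-neighbours of images of embedded non-neighbours and the used vertices.

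The positive invariant is the following. Each $\widehat A_t(y)$ is a candidate-scale set with witness $\phi(N^-_t(y))$ and has size at least $(d/2)^{a_t(y)}p^{a_t(y)}|V_{\chi(y)}|$. In addition, every pair $(\widehat A_t(y),\widehat A_t(y'))$ with $\chi(y)\ne\chi(y')$ is $(\varepsilon_h,d,p)$-lower-regular, where $h$ counts the images already placed next to $y$ or $y'$, so $h<2\Delta$. The hierarchy $\varepsilon_{h+1}\gg\eta_h$ is chosen so that slicing (\cref{fact:slicing}) plus the $\eta_h$-regularity from (iii) give the next level. When we embed $x_t$, we discard the bad vertices of $C_t(x_t)$. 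These are the vertices that fail \eqref{eq:inherit-degree} for some unembedded neighbour $y$, or fail the second and third inheritance clauses for some pair $(y,y')$ of unembedded vertices touching $N(x_t)$. Since there are $O(\Delta^2)$ such conditions, each excluding at most $\varepsilon_h|\widehat A_t(x_t)|$ vertices, the bad vertices form a small fraction of $\widehat A_t(x_t)$. This loss is negligible provided $|C_t(x_t)|\ge\frac12\Lambda^{a_t}\cdot(\text{const})\,|V|$ with $\Lambda\approx dp/2$. The condition $K\ge K_0\log(2n)$ keeps all these scales at least $\theta p^{|W|}|V_i|$ and large compared with $n$, so used vertices are also negligible.

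The induced invariant is controlled by $\Phi_t$. When $x=x_t$ is placed at $v$, each non-neighbour $y$ of $x$ loses $|N_\Gamma(v)\cap C_t(y)|$ candidates, and each neighbour has its $\widehat A$ and $C$ shrink together. We choose $v$ to minimise a weighted loss. Averaging over good $v\in C_t(x)$, the sum over $y$ of $\omega_y|N_\Gamma(v)\cap\widehat A_t(y)|/|\widehat A_t(y)|$, with weights $\omega_y$ given by the $\tau$-power derivative of the $y$-term of $\Phi_t$, is bounded by $C(p+\xi)\sum\omega_y$. This uses \eqref{eq:edge-load-good} for $y$ in other parts. For $y$ in the same part as $x$ the only conflict is reuse, and that overlap is handled by \eqref{eq:weighted-overlap-good}. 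For neighbours, the ratio $|\widehat A|/|C|$ could increase. We would bound this by a second averaging: the expected value over $v$ of $|N_G(v)\cap C|/|N_G(v)\cap \widehat A|$ is controlled by \eqref{eq:edge-load-good} applied with $S=C_t(y)$, $S'=C_t(x)$, together with lower-regularity, since $C_t(y)$ is itself candidate-scale. With the right $\tau$ (say $\tau\approx\log n$), this yields $\Phi_{t+1}\le\Phi_t(1+O((p+\xi)\log n))$. Summing over $n$ steps and using $(p+\xi)n\log(2n)\le\lambda_0$ gives $\Phi_{n'}\le e^{O(\lambda_0)}\Phi_0=O(n)$. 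Since $\tau\sim\log n$, each individual ratio is then $O(1)$, so $|C_t(y)|\ge c\,|\widehat A_t(y)|$ for all $y$ and $t$, which feeds back into the positive step. This step is the main obstacle. The delicate part is the neighbour contribution, where positive shrinkage interacts with the deletions and the sets cease to be exactly $N_\Gamma$-neighbourhoods. I would handle it by enlarging the witness so that the intersection stays candidate-scale, using $\theta$ small.

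For the buffer, each $b\in B$ ends with $|C(b)|\ge c'(dp/2)^{\Delta}|V|\gtrsim K$. Because $B$ is separated, the remaining constraints are only pairwise non-adjacencies in $\Gamma$ and distinctness between buffer vertices, since no two buffer vertices are adjacent or share neighbours. Sample a uniform subset $L_b\subseteq C(b)$ of size $s=\lceil 2D\rceil$ with $D\asymp\log n$. Form $F$ on $\bigsqcup L_b$ by joining $u\in L_b$ to $u'\in L_{b'}$ when $uu'\in E(\Gamma)$ or $u=u'$. The expected degree of a fixed $u$ into $\bigcup L_{b'}$ is at most $\sum_{b'}s\,|N_\Gamma(u)\cap C(b')|/|C(b')|$, which is at most $sC(p+\xi)n+s\xi=o(1)$ by the conflict estimates. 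By \cref{fact:large-deviation} and a union bound over the $O(ns)$ sampled vertices, with positive probability every degree is below $D-1$. \Cref{cor:haxell-degree} then gives an independent transversal, which completes an injective acceptable part-respecting $\phi$.
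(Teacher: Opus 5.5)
Your architecture (a greedy phase with raw positive sets $\widehat A_t(y)$ and induced sets $C_t(y)$, a power potential with $\tau\asymp\log n$, then sampling plus Haxell on the separated buffer) matches the paper's. However, the step you flag as the main obstacle is left open, and the route you suggest for it cannot work.

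When a neighbour $x$ of $y$ is placed at $v$, the ratio $|\widehat A_t(y)|/|C_t(y)|$ can grow by a constant factor for every admissible $v$. The reason is that $G$ is an arbitrary lower-regular subgraph of $\Gamma$. Its $p$-relative density towards $C_t(y)$ may be about $d$, while towards $\widehat A_t(y)\setminus C_t(y)$ it is about $1$. Then $|N_G(v;\widehat A_t(y))|/|N_G(v;C_t(y))|\approx d^{-1}|\widehat A_t(y)|/|C_t(y)|$ for typical $v$. Raised to the power $\tau\asymp\log n$, this multiplies the $y$-term of the potential by $n^{\Omega(1)}$. So no averaging over $v$ gives $\Phi_{t+1}\le(1+O((p+\xi)\log n))\Phi_t$, and enlarging witnesses only addresses candidate-scale-ness, not this growth.

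The missing idea is the normalisation $R_t(y)=\Lambda^{-a_t(y)}|\widehat A_t(y)|/|C_t(y)|$ with $\Lambda$ a large \emph{constant}. It is not a scale like $dp/2$, as your positive step suggests; that choice would make every $R_t(y)$ with $a_t(y)\ge1$ at least $p^{-a_t(y)}$. One picks $v$ satisfying two conditions:
\begin{itemize}
\item $|N_G(v;C_t(y))|\ge\frac d4p|C_t(y)|$, obtained by slicing the regular pair $(\widehat A_t(x),\widehat A_t(y))$ down to $(C_t(x),C_t(y))$ via $|C|\ge\rho|\widehat A|$;
\item $|N_\Gamma(v;\widehat A_t(y))|\le Up|\widehat A_t(y)|$, obtained from the edge-load estimate and Markov.
\end{itemize}
Then the ratio grows by at most the constant factor $4U/d$, which the extra $\Lambda^{-1}$ absorbs, so neighbour terms never increase. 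Since $a_t(y)\le\Delta$, the price is only the constant $\Lambda^{-\Delta}$ in $\rho$.

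The same ambient upper bound is also missing from your positive step. Clause (iii) gives $\eta_h$-regularity only for the ambient neighbourhood $N_\Gamma(v;\widehat A_t(y))$. Slicing down to the new positive set $N_G(v;\widehat A_t(y))$ requires the latter to occupy a constant fraction of the former, i.e.\ $|N_\Gamma(v;\widehat A_t(y))|\le Up|\widehat A_t(y)|$, together with the hierarchy $\eta_h\ll (d/2U)\,\varepsilon_{h+1}$. Requiring $\eta_h\ll\varepsilon_{h+1}$ and slicing is not enough.

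Several smaller points also need repair.
\begin{itemize}
\item The regularity invariant should only be kept for edges $yy'\in E(H)$. For all cross-part pairs there are $\Theta(n)$ affected pairs per step, not $O(\Delta^2)$.
\item Linearising $R^\tau$ through its derivative is only valid once you discard every $v$ with some single loss $\ell_y(v)>1/(2\tau)$. That union bound over $y$ is where $(p+\xi)n\log(2n)\le\lambda_0$ is really used.
\item The hypothesis $K\ge K_0\log(2n)$ does not make candidate sets large compared with $n$. So reuse is not negligible by size and must be charged to the weighted-overlap estimate, as you do inside the potential.
\item In the buffer step, the conflict estimates only bound averages over $u\in C(b)$. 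Before sampling you must Markov-clean each list down to the vertices with small individual load; otherwise the pointwise expected-degree bound you state for a fixed $u$ does not follow.
\end{itemize}
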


\begin{proof}
We choose the constants in the following order.  First choose a constant
$U\ge 1$ sufficiently large in terms of $C,\Delta,\kappa$, and let
$U_a=U^a$ for $0\le a\le \Delta$.  Then choose
$\Lambda$ sufficiently large in terms of $U_\Delta,d,\Delta$.  Next choose
$\gamma>0$, and define 
$\tau=\gamma\log(2n)$ and
$\rho=e^{-3/\gamma}\Lambda^{-\Delta}$.
Choose a  small enough constant $\lambda_* >0$, depending only on
$U,d,\Delta$; in particular, we require $\lambda_*\le d/(2U)$.  Finally, choose
$\theta>0$ sufficiently small in terms of $\rho,d,\Delta$, and choose the vector $\boldsymbol\varepsilon$ so that
$0<\varepsilon_0\le\varepsilon_1\le\cdots\le\varepsilon_{2\Delta}$.
More precisely, for every $0\le h<2\Delta$ we require
\begin{equation}
        0<\varepsilon_h\ll \eta_h\ll \lambda_*\varepsilon_{h+1}
        \quad\text{and}\quad
        \varepsilon_{2\Delta}\ll \rho d/\Delta^2.
        \label{eq:epsilon-hierarchy}
\end{equation}
Thus, any pair which is $(\varepsilon_h,d,p)$-lower-regular is also
$(\varepsilon_{h'},d,p)$-lower-regular for every $h'\ge h$.
Then choose $\xi_0,\lambda_0>0$ sufficiently small, choose $n_0$ sufficiently large that $\gamma\log(2n_0)\ge1$, and choose $K_0$ sufficiently
large.  All constants depend only on the fixed parameters of the theorem.  We let $\varepsilon_*=\varepsilon_{2\Delta}$.

Fix a separated buffer $B$, and order the vertices of
$V(H)\setminus B$ as
$x_1,\ldots,x_m$.
We embed these vertices greedily and embed the buffer vertices only at the end.
At time $t$, write $D_t=\{x_1,\ldots,x_t\}$.
For a partial embedding $\phi_t:D_t\to V(\Gamma)$ and an unembedded vertex
$y$, let $a_t(y)$ be the number of already embedded neighbours of $y$.
The raw positive candidate set is
$\widehat A_t(y)
        =
        N_G\bigl(\phi_t(N_H(y)\cap D_t);V_{\chi(y)}\bigr)$.
The raw induced candidate set $\widehat C_t(y)$ is obtained from
$\widehat A_t(y)$ by deleting every vertex adjacent in $\Gamma$ to the
image of an already embedded non-neighbour of $y$.  The available induced
candidate set is
$C_t(y)=\widehat C_t(y)\setminus \operatorname{Im}(\phi_t)$.
The embedding will always choose the next image from $C_t(y)$.  The raw sets
$\widehat A_t(y)$, rather than the available sets, are the ones used for
positive regularity.

Let
$\delta=d/4$.
We call $\phi_t$ \emph{positive-good} if it is part-respecting and injective, maps
every edge inside $D_t$ to an edge of $G$, and the following two conditions
hold. First, every unembedded vertex $y$ satisfies
\begin{equation}
        \delta^{a_t(y)}p^{a_t(y)}|V_{\chi(y)}|
        \le |\widehat A_t(y)|
        \le U_{a_t(y)}p^{a_t(y)}|V_{\chi(y)}|.
        \label{eq:positive-size}
\end{equation}
Second, for every unembedded edge $yz\in E(H)$, the pair
$(\widehat A_t(y),\widehat A_t(z))
$
is $(\varepsilon_{a_t(y)+a_t(z)},d,p)$-lower-regular in $G$.

The empty embedding is positive-good, since $U_0=1$.  Moreover, if
$\phi_t$ is positive-good, then each $\widehat A_t(y)$ is a
candidate-scale set, provided $\theta\le\delta^\Delta$.  Indeed,
$\widehat A_t(y)\subseteq
N_\Gamma(\phi_t(N_H(y)\cap D_t);V_{\chi(y)})$, the witness has size
$a_t(y)\le\Delta$, and the lower bound in \eqref{eq:positive-size} gives the
required size condition.

The role of positive-goodness is to isolate the part of the argument which is
identical in spirit to a sparse blow-up lemma: the raw positive candidate sets
have the expected scale, and all future positive candidate pairs remain
lower-regular.  The induced constraints will be added only through the sets
$C_t(y)$ and through the potential $\Phi_t$.

We first consider the purely positive extension step.

\begin{claim}[Positive inheritance exceptions]
\label{claim:positive-greedy}
If $\phi_t$ is positive-good and $x=x_{t+1}$, then there is a set
$B_t^+(x)\subseteq \widehat A_t(x)$ with
$|B_t^+(x)|\le C_2\Delta^2\varepsilon_* |\widehat A_t(x)|$, where $C_2$ depends only on $\Delta$, such that every unused vertex
$v\in\widehat A_t(x)\setminus B_t^+(x)$ which additionally satisfies
\begin{equation}
        |N_\Gamma(v;\widehat A_t(y))|
        \le Up|\widehat A_t(y)|
        \label{eq:upper-choice}
\end{equation}
for every future neighbour $y$ of $x$, extends $\phi_t$ to a
positive-good embedding of $D_t\cup\{x\}$.
\end{claim}

\begin{proof}
We list the possible positive failures.  Let $y$ be a future neighbour of
$x$.  Put $h=a_t(x)+a_t(y)$.  Since
$(\widehat A_t(x),\widehat A_t(y))
$
is $(\varepsilon_h,d,p)$-lower-regular and both sets are candidate-scale,
the envelope-positive inheritance condition gives that all but at most
$\varepsilon_h|\widehat A_t(x)|\le \varepsilon_*|\widehat A_t(x)|$ choices
$v\in\widehat A_t(x)$ satisfy
\begin{equation}
        |N_G(v;\widehat A_t(y))|
        \ge \frac d2 p|\widehat A_t(y)|.
        \label{eq:positive-degree}
\end{equation}
For such $v$, the new raw positive candidate set of $y$ has size at least
$$
        \frac d2 p|\widehat A_t(y)|
        \ge
        \frac d2 \delta^{a_t(y)}p^{a_t(y)+1}|V_{\chi(y)}|
        \ge
        \delta^{a_t(y)+1}p^{a_t(y)+1}|V_{\chi(y)}|,
$$
because $\delta=d/4$.

For the upper bound, suppose in addition that $v$ satisfies
\eqref{eq:upper-choice} for this future neighbour $y$.  Since
$\widehat A_{t+1}(y)=N_G(v;\widehat A_t(y))
        \subseteq N_\Gamma(v;\widehat A_t(y))$, 
we have
$$
        |\widehat A_{t+1}(y)|
        \le Up|\widehat A_t(y)|
        \le U_{a_t(y)+1}p^{a_t(y)+1}|V_{\chi(y)}|.
$$
This is the required upper bound in \eqref{eq:positive-size}.  If $y$ is not
a future neighbour of $x$, then $\widehat A_t(y)$ is unchanged.

It remains to preserve lower-regularity of future raw candidate pairs.  If a
future edge $yz$ is unaffected by the embedding of $x$, there is nothing to
prove.  Suppose first that exactly one of $y,z$, say $y$, is a future
neighbour of $x$.  Put $h=a_t(y)+a_t(z)$.  The one-sided part of
envelope-positive inheritance, applied to $\widehat A_t(x)$,
$\widehat A_t(y)$, and $\widehat A_t(z)$, gives
$(\eta_h,d,p)$-lower-regularity of
$(N_\Gamma(v;\widehat A_t(y)),\widehat A_t(z))$ for all but at most
$\varepsilon_h|\widehat A_t(x)|\le \varepsilon_*|\widehat A_t(x)|$
choices of $v$.
For choices of $v$ satisfying \eqref{eq:positive-degree} and
\eqref{eq:upper-choice}, the set $N_G(v;\widehat A_t(y))$ has size at least
$(d/2)p|\widehat A_t(y)|$, while
$|N_\Gamma(v;\widehat A_t(y))|
        \le Up|\widehat A_t(y)|$.
Hence $N_G(v;\widehat A_t(y))$ occupies relative density at least
$d/(2U)\ge\lambda_*$ inside $N_\Gamma(v;\widehat A_t(y))$.  By slicing and
\eqref{eq:epsilon-hierarchy},
$(N_G(v;\widehat A_t(y)),\widehat A_t(z))$ is
$(\varepsilon_{h+1},d,p)$-lower-regular, as required for the new rank
$a_{t+1}(y)+a_{t+1}(z)=h+1$.  This index is within the hierarchy: in the
one-sided case the future edge $yz$ and the additional neighbour $x$ give
$a_t(y)+1\le\Delta$ and $a_t(z)\le\Delta$, so $h+1\le2\Delta$.

If both $y$ and $z$ are future neighbours of $x$, let
$h=a_t(y)+a_t(z)$.  Since $yz$ is a future edge and both $y$ and $z$
are neighbours of $x$, the vertices $x,y,z$ span a triangle in $H$.
Thus their parts are pairwise distinct, because $\chi$ is proper.

The two-sided part of envelope-positive inheritance first
gives $(\eta_h,d,p)$-lower-regularity of the pair
$(N_\Gamma(v;\widehat A_t(y)),N_\Gamma(v;\widehat A_t(z)))$,
outside an exceptional set of size at most $\varepsilon_h|\widehat A_t(x)|$.
Applying
the same fixed-fraction and slicing argument in both coordinates gives
$(\varepsilon_{h+1},d,p)$-lower-regularity of the pair
$(N_G(v;\widehat A_t(y)),N_G(v;\widehat A_t(z)))$.
Since the hierarchy is increasing, this is also
$(\varepsilon_{h+2},d,p)$-lower-regular, as required for the new rank
$a_{t+1}(y)+a_{t+1}(z)=h+2$.  The index lies in the hierarchy, since before
embedding $x$, both $y$ and $z$ still have $x$ as a future neighbour.
Hence $a_t(y),a_t(z)\le\Delta-1$, and so $h+2\le2\Delta$.

There are only $O_\Delta(1)$ affected future pairs.  Let $B_t^+(x)$ be the
union of all positive exceptional sets.  Then, for a suitable constant
$C_2=C_2(\Delta)$,
$|B_t^+(x)|
        \le C_2\Delta^2\varepsilon_*|\widehat A_t(x)|$.
Every unused $v\in\widehat A_t(x)\setminus B_t^+(x)$ satisfying
\eqref{eq:upper-choice} for every future neighbour of $x$ is part-respecting,
preserves all raw candidate-size bounds, preserves all lower-regularity
conditions for future raw candidate pairs, and maps every newly completed edge
to an edge of $G$.  Hence it extends $\phi_t$ to a positive-good embedding.
\end{proof}

We now add the induced and availability constraints.  For an unembedded vertex
$y$, let
$$
        R_t(y)=
        \Lambda^{-a_t(y)}
        \frac{|\widehat A_t(y)|}{|C_t(y)|},
\quad\text{and}\quad 
\Phi_t=\sum_{y\notin D_t} R_t(y)^\tau.$$ Initially $\Phi_0\le n$.  We maintain the stronger bound
$\Phi_t\le n^2$. 
If $\Phi_t\le n^2$, then for every unembedded $y$,
$R_t(y)\le n^{2/\tau}\le e^{3/\gamma}$. Since $a_t(y)\le\Delta$, this implies
\begin{equation}
        |C_t(y)|
        \ge
        \rho |\widehat A_t(y)|,
        \qquad
        \rho=e^{-3/\gamma}\Lambda^{-\Delta}.
        \label{eq:C-comparable-A}
\end{equation}
Together with \eqref{eq:positive-size}, this gives
\begin{equation}
        |C_t(y)|
        \ge
        c\,p^{a_t(y)}|V_{\chi(y)}|,
        \qquad
        c=\rho\delta^\Delta.
        \label{eq:C-candidate-scale}
\end{equation}
In particular, if $\theta\le c/6$, then every set $C_t(y)$ and every subset
of $C_t(y)$ of size at least $|C_t(y)|/6$ is candidate-scale, with the same
witness as $\widehat A_t(y)$.

\begin{claim}[Induced greedy step]
\label{claim:induced-greedy-step}
Suppose that $\phi_t$ is an induced partial embedding, is positive-good, and
satisfies $\Phi_t\le n^2$.  Then, for $x=x_{t+1}$, there is a vertex
$v\in C_t(x)$ such that
$\phi_{t+1}=\phi_t\cup\{x\mapsto v\}$
is an induced partial embedding, is positive-good, and satisfies
\begin{equation}
        \Phi_{t+1}
        \le
        \bigl(1+C_3(p+\xi)\log(2n)\bigr)\Phi_t ,
        \label{eq:potential-step}
\end{equation}
where $C_3$ depends only on the fixed parameters.
\end{claim}

\begin{proof}
Let $x=x_{t+1}$.  By \eqref{eq:C-comparable-A},
$|C_t(x)|\ge \rho|\widehat A_t(x)|$. Let $B_t^+(x)$ be the positive exceptional set from \cref{claim:positive-greedy}.  Since
$|B_t^+(x)|\le C_2\Delta^2\varepsilon_*|\widehat A_t(x)|
$
and $\varepsilon_*\ll\rho/\Delta^2$, we have
$|C_t(x)\setminus B_t^+(x)|\ge \frac34 |C_t(x)|$.

Let $y$ be a future neighbour of
$x$.  Since $\phi_t$ is positive-good, the pair
$(\widehat A_t(x),\widehat A_t(y))
$
is $(\varepsilon_h,d,p)$-lower-regular for
$h=a_t(x)+a_t(y)$, and hence is $(\varepsilon_*,d,p)$-lower-regular.
The sets $C_t(x)$ and $C_t(y)$ have relative size at least $\rho$
inside $\widehat A_t(x)$ and $\widehat A_t(y)$, respectively.  By
\cref{fact:slicing}, $(C_t(x),C_t(y))$ is
$(\varepsilon_*/\rho,d,p)$-lower-regular.  If more than
$(\varepsilon_*/\rho)|C_t(x)|$ vertices $v\in C_t(x)$ failed
\begin{equation}\label{eq: ast}
  |N_G(v;C_t(y))|\ge \frac d4 p|C_t(y)|,
\end{equation}
then those vertices together with $C_t(y)$ would contradict
$(\varepsilon_*/\rho,d,p)$-lower-regularity, provided
$\varepsilon_*/\rho\ll d$.  Hence all but at most
$(\varepsilon_*/\rho)|C_t(x)|$ vertices of $C_t(x)$ satisfy (\ref{eq: ast}).

We also impose the upper-choice condition needed in
\cref{claim:positive-greedy}.  Since $C_t(x)$ and $\widehat A_t(y)$ are
candidate-scale sets in distinct parts, the edge-load estimate gives
$$
        \frac1{|C_t(x)|}
        \sum_{v\in C_t(x)} |N_\Gamma(v;\widehat A_t(y))|
        \le C(p+\xi)|\widehat A_t(y)|
        \le 2Cp|\widehat A_t(y)|,
$$
where we used $\xi\le p$.  By Markov's inequality, the proportion of
vertices $v\in C_t(x)$ for which
$|N_\Gamma(v;\widehat A_t(y))|>Up|\widehat A_t(y)|
$
is at most $2C/U$.  We choose $U$ so large that the total loss over the at
most $\Delta$ future neighbours of $x$ is at most $|C_t(x)|/16$.

There are at most $\Delta$ future neighbours of $x$.  Therefore the set
$\mathcal P_t(x)$ of vertices in $C_t(x)\setminus B_t^+(x)$ satisfying
\begin{equation}
        |N_G(v;C_t(y))|
        \ge \frac d4 p|C_t(y)|
        \label{eq:future-neighbour-good}
\end{equation}
and
\begin{equation}
        |N_\Gamma(v;\widehat A_t(y))|
        \le Up|\widehat A_t(y)|
        \label{eq:future-neighbour-upper-good}
\end{equation}
for every future neighbour $y$ of $x$ has size at least
$|\mathcal P_t(x)|\ge \frac13 |C_t(x)|$. In particular, $\mathcal P_t(x)$ is candidate-scale, provided
$\theta\le c/6$.

Fix $v\in\mathcal P_t(x)$.  We first consider a future neighbour $y$ of
$x$.  Then
$\widehat A_{t+1}(y)=N_G(v;\widehat A_t(y))
$
and
$C_{t+1}(y)=N_G(v;C_t(y))$.  No additional injectivity deletion is needed here,
since $y$ is adjacent to $x$, hence $\chi(y)\ne\chi(x)$ and
$v\notin C_t(y)$.
The positive-good upper bound gives
$|\widehat A_{t+1}(y)|
        \le
        U_{a_t(y)+1}p^{a_t(y)+1}|V_{\chi(y)}|$,
while \eqref{eq:positive-size} gives $|\widehat A_t(y)|
        \ge
        \delta^{a_t(y)}p^{a_t(y)}|V_{\chi(y)}|$. 
Together with \eqref{eq:future-neighbour-good}, this yields
$$
\begin{aligned}
        \frac{R_{t+1}(y)}{R_t(y)}
        &=
        \Lambda^{-1}
        \frac{|\widehat A_{t+1}(y)|}{|\widehat A_t(y)|}
        \frac{|C_t(y)|}{|C_{t+1}(y)|}        \\
        &\le
        \Lambda^{-1}
        \cdot
        \frac{U_{a_t(y)+1}p^{a_t(y)+1}|V_{\chi(y)}|}
             {\delta^{a_t(y)}p^{a_t(y)}|V_{\chi(y)}|}
        \cdot
        \frac{4}{dp}        \\
        &\le
        \Lambda^{-1}\frac{4U_\Delta}{d\delta^\Delta}.
\end{aligned}
$$
Thus, by choosing
$\Lambda\ge \frac{4U_\Delta}{d\delta^\Delta}$,
we obtain
$R_{t+1}(y)\le R_t(y)
$
for every future neighbour $y$ of $x$.

Now let $y$ be an unembedded non-neighbour of $x$.  Then no new positive
constraint is imposed on $y$, so
$\widehat A_{t+1}(y)=\widehat A_t(y)$. The only possible increase in $R_t(y)$ comes from deleting from $C_t(y)$ the
ambient neighbours of $v$, and, if $v\in C_t(y)$, from deleting $v$
itself for injectivity.  Define
$$
        \ell_y(v)=
        \frac{|N_\Gamma(v;C_t(y))|+\mathbf 1_{v\in C_t(y)}}{|C_t(y)|}.
$$
Then
$|C_{t+1}(y)|
        \ge
        (1-\ell_y(v))|C_t(y)|.
$
Hence, whenever $\ell_y(v)\le1/2$,
$R_{t+1}(y)\le R_t(y)(1+2\ell_y(v))$.

We next discard the vertices for which some individual loss term is too large.
Let $\mathcal B_t^0(x)$ be the set of vertices $v\in\mathcal P_t(x)$ for
which
$\ell_y(v)>\frac1{2\tau}
$
for at least one future non-neighbour $y$ of $x$. Fix such a future non-neighbour $y$.  If $\chi(y)\ne\chi(x)$, then
$\mathcal P_t(x)$ and $C_t(y)$ are candidate-scale sets in distinct parts,
and the edge-load estimate gives
$$
        \frac1{|\mathcal P_t(x)|}
        \sum_{v\in\mathcal P_t(x)}
        \frac{|N_\Gamma(v;C_t(y))|}{|C_t(y)|}
        \le C(p+\xi).
$$
There is no injectivity contribution in this case, because the two sets lie in
distinct parts.  Hence the fraction of $v\in\mathcal P_t(x)$ with
$\ell_y(v)>1/(2\tau)$ is at most $2C\tau(p+\xi)$.
If $\chi(y)=\chi(x)$, then there are no ambient edges inside the part, and
$\ell_y(v)\le 1/|C_t(y)|\le 1/(cK)$.  Taking $K_0$ sufficiently large gives
$1/(cK)\le 1/(2\tau)$, so no vertex is bad for this $y$.

Taking a union bound over at most $n$ possible vertices $y$, and using
$(p+\xi)n\log(2n)\le\lambda_0$, we obtain
$|\mathcal B_t^0(x)|\le \frac12|\mathcal P_t(x)|
$
after choosing $\lambda_0$ sufficiently small.  Let
$\mathcal P_t^0(x)=\mathcal P_t(x)\setminus\mathcal B_t^0(x)$.
Then $|\mathcal P_t^0(x)|\ge |C_t(x)|/6$, so $\mathcal P_t^0(x)$ is
candidate-scale.  Moreover,
\begin{equation}
        \ell_y(v)\le \frac1{2\tau}
        \label{eq:ell-small}
\end{equation}
for every $v\in\mathcal P_t^0(x)$ and every future non-neighbour $y$ of
$x$.

We average the weighted loss over $v\in\mathcal P_t^0(x)$.  For the
ambient-edge part, only vertices $y$ with $\chi(y)\ne\chi(x)$ contribute.
Since $\mathcal P_t^0(x)$ and $C_t(y)$ are candidate-scale sets in distinct
parts, \eqref{eq:edge-load-good} gives
$$
        \frac1{|\mathcal P_t^0(x)|}
        \sum_{v\in\mathcal P_t^0(x)}
        \frac{|N_\Gamma(v;C_t(y))|}{|C_t(y)|}
        =
        \frac{e_\Gamma(\mathcal P_t^0(x),C_t(y))}
             {|\mathcal P_t^0(x)||C_t(y)|}
        \le C'(p+\xi).
$$
For the injectivity part, only vertices $y$ with $\chi(y)=\chi(x)$
contribute.  The witness sets defining $C_t(y)$ have bounded incidence:
indeed each already embedded vertex is a neighbour of at most $\Delta$
unembedded vertices.  Hence the weighted overlap estimate
\eqref{eq:weighted-overlap-good}, applied with weights
$\omega_y=R_t(y)^\tau$, gives
$$
        \sum_{\substack{y\notin D_t\cup\{x\}\\ xy\notin E(H)\\
                        \chi(y)=\chi(x)}}
        R_t(y)^\tau
        \frac{|\mathcal P_t^0(x)\cap C_t(y)|}
             {|\mathcal P_t^0(x)||C_t(y)|}
        \le
        \xi
        \sum_{\substack{y\notin D_t\cup\{x\}\\ xy\notin E(H)}}
        R_t(y)^\tau .
$$
Combining the two estimates, we obtain
\begin{equation}
        \frac1{|\mathcal P_t^0(x)|}
        \sum_{v\in\mathcal P_t^0(x)}
        \sum_{\substack{y\notin D_t\cup\{x\}\\ xy\notin E(H)}}
        R_t(y)^\tau\ell_y(v)
        \le C_3'(p+\xi)\Phi_t .
        \label{eq:average-loss}
\end{equation}
Thus some $v\in\mathcal P_t^0(x)$ satisfies the corresponding pointwise bound.

Choose such a vertex $v$.  By \eqref{eq:ell-small} and the elementary
inequality
$(1+2u)^\tau\le 1+4\tau u$
for $0\le u\le \frac1{2\tau}$,
the non-neighbour contribution to the potential increases by at most
$4\tau C_3'(p+\xi)\Phi_t$.
The neighbour contribution does not increase, and the vertex $x$ disappears
from the potential sum.  Hence
$\Phi_{t+1}
        \le
        \bigl(1+C_3(p+\xi)\log(2n)\bigr)\Phi_t
$
after increasing $C_3$.  Since $v\in C_t(x)$, all induced constraints with
already embedded vertices are satisfied.  Since $v\notin B_t^+(x)$, the
extension is positive-good.  This proves the claim.
\end{proof}

Starting from $\Phi_0\le n$, \cref{claim:induced-greedy-step} and the
hypothesis
$(p+\xi)n\log(2n)\le\lambda_0
$
give
$$
        \Phi_t
        \le
        n\exp\bigl(C_3(p+\xi)n\log(2n)\bigr)
        \le n^2
$$
throughout the greedy phase, after choosing $\lambda_0$ sufficiently small.
Thus all non-buffer vertices are embedded.

At the end of the greedy phase, every buffer vertex $b\in B$ has a final list
$L_b=C_m(b)$. Since $B$ is separated, no two vertices of $B$ are adjacent, and all
neighbours of $b$ have already been embedded.  By
\eqref{eq:C-candidate-scale},
$|L_b|\ge cK$. It remains to embed the buffer vertices.  For $b\in B$ and $u\in L_b$, put
\begin{equation}
        \ell_b(u)=
        \sum_{b'\ne b}\frac{\mathbf 1_{u\in L_{b'}}}{|L_{b'}|}
        +
        \sum_{b'\ne b}\frac{|N_\Gamma(u;L_{b'})|}{|L_{b'}|}.
        \label{eq:buffer-load}
\end{equation}
Each list $L_b$ is candidate-scale.  Moreover, since $B$ is separated, no
already embedded vertex is a neighbour of two different buffer vertices.
Therefore the witness sets defining the lists $L_b$ have bounded incidence,
indeed incidence at most one.

We estimate the average of $\ell_b$ over $L_b$.  The first term is an
injectivity-overlap term.  The weighted overlap estimate gives
$$
        \frac1{|L_b|}
        \sum_{u\in L_b}
        \sum_{b'\ne b}\frac{\mathbf 1_{u\in L_{b'}}}{|L_{b'}|}
        =
        \sum_{b'\ne b}
        \frac{|L_b\cap L_{b'}|}{|L_b||L_{b'}|}
        \le \xi |B|.
$$
The second term is an ambient-edge term.  For $b'$ in the same part as $b$,
there are no ambient edges between $L_b$ and $L_{b'}$.  For $b'$ in a
different part, the edge-load estimate gives
$$
        \frac{e_\Gamma(L_b,L_{b'})}{|L_b||L_{b'}|}
        \le C(p+\xi).
$$
Hence
\begin{equation}
        \frac1{|L_b|}\sum_{u\in L_b}\ell_b(u)
        \le C_4(p+\xi)|B|
        \le C_4(p+\xi)n .
        \label{eq:average-buffer-load}
\end{equation}
Put $\beta=C_4(p+\xi)n$.
By choosing $\lambda_0$ sufficiently small, we may assume $\beta$ is smaller
than a fixed absolute constant.  By Markov's inequality, the cleaned list
$L_b^*=\{u\in L_b:\ell_b(u)\le\sqrt\beta\}$
has size at least
$$
        |L_b^*|\ge (1-\sqrt\beta)|L_b|\ge \frac12 |L_b|\ge \frac{cK}{2}.
$$

Choose an even integer $D$ satisfying
$D_0\log(2n)\le D\le \frac{cK}{4}$, where $D_0$ will be chosen sufficiently large.  This is possible by taking
$K_0$ sufficiently large.  Independently for each $b\in B$, choose a
uniform $D$-element subset
$S_b\subseteq L_b^*$.
Form a graph $Q$ whose vertices are the labelled pairs $(b,u)$, with
$u\in S_b$, and where $(b,u)$ is adjacent to $(b',u')$, $b\ne b'$, if
either $u=u'$ or $uu'\in E(\Gamma)$.

Fix $b\in B$ and $u\in L_b^*$, and condition on the event $u\in S_b$.
For $b'\ne b$, the number of conflicts from $(b,u)$ into the sampled list
$S_{b'}$ is
$|S_{b'}\cap (\{u\}\cup N_\Gamma(u;L_{b'}))|$.
We shall apply \cref{fact:large-deviation} with
$\Omega_{b'}=L_{b'}^*$, and $A_{b'}=L_{b'}^*\cap(\{u\}\cup N_\Gamma(u;L_{b'}))$. The samples for different $b'$ are independent; the dependence among the
points inside each fixed-size sample is exactly the setting covered by
\cref{fact:large-deviation}. Since $S_{b'}$ is sampled from $L_{b'}^*$, and
$|L_{b'}^*|\ge |L_{b'}|/2$, the total expected number of conflicts from
$(b,u)$ into all other sampled lists is at most
$2D\ell_b(u)\le 2D\sqrt\beta$.

By \cref{fact:large-deviation}, provided $\beta$ is
small enough so that $D/2\ge e\cdot 2D\sqrt\beta$, we have
$$
        \Prb\left(\deg_Q(b,u)\ge \frac D2\ \middle|\ u\in S_b\right)
        \le
        \left(4e\sqrt\beta\right)^{D/2}
        \le
        \exp(-c'D),
$$
for some absolute constant $c'>0$.

Therefore the expected number of selected labelled vertices of degree at least
$D/2$ is at most
$$
\sum_{b\in B}\sum_{u\in L_b^*}
        \Prb(u\in S_b)
        \Prb\left(\deg_Q(b,u)\ge \frac D2\ \middle|\ u\in S_b\right)
        \le
\sum_{b\in B}\sum_{u\in L_b^*}
        \frac{D}{|L_b^*|}\exp(-c'D).
$$
This is at most
$$
        |B|D\exp(-c'D)
        \le nD\exp(-c'D).
$$
Choosing $D_0$ sufficiently large makes this expectation less than one.
Consequently, there is a choice of the sampled sets $S_b$ for which
$\Delta(Q)<\frac D2$.
Fix such a choice.

The vertex set of $Q$ is partitioned into the parts
$\{b\}\times S_b$ with $b\in B$, each of size $D$.  Since $\Delta(Q)\le D/2-1$, \cref{cor:haxell-degree}
applied with parameter $D/2$ gives an independent transversal of $Q$.  Thus
we may choose one vertex $u_b\in S_b\subseteq L_b$ for each $b\in B$, such
that the chosen vertices are distinct and no two of them are adjacent in
$\Gamma$.

Since $u_b\in L_b=C_m(b)$, all constraints between $b$ and the already
embedded vertices are satisfied.  Since $B$ is separated, there are no edges
inside $B$.  The independent transversal prevents both collisions and ambient
edges between distinct buffer images.  Hence extending $\phi_m$ by
$b\mapsto u_b$, $b\in B$, 
gives a part-respecting injective acceptable embedding of $H$ into
$\Gamma$.  This completes the proof.
\end{proof}
\section{The sparse random partite host}
\label{sec:random-host}
This section verifies the host-side hypotheses required by
\cref{thm:conflict-stable-embedding} in a sparse random partite graph.  There
are two tasks.  First, we need inheritance of lower-regularity inside ambient
neighbourhoods, uniformly over all subgraphs $G\subseteq\Gamma$.  Second, we
need the ambient conflict estimates which control induced deletions.  We begin
with the inheritance input, which is supplied by the sparse blow-up lemma
machinery of Allen, Böttcher, Hàn, Kohayakawa and Person.
\subsection{Random lower-regularity inheritance}

We use the following random inheritance result of Allen, Böttcher, Hàn,
Kohayakawa and Person, in a form obtained by taking common constants in their
one-sided and two-sided inheritance lemmas.

\begin{theorem}[Random lower-regularity inheritance {\cite[Lemmas~1.26 and~1.27]{ABHKP}}]
\label{thm:ABHKP-inheritance}
For every $0<\varepsilon',d\le1$ there are constants
$\varepsilon_{\rm ABHKP}(\varepsilon',d)>0$ and $C_{\mathrm{inh}}>0$ such
that, for every $0<\varepsilon<\varepsilon_{\rm ABHKP}(\varepsilon',d)$ and
$0<p<1$, the random graph $\Gamma=G(N,p)$ asymptotically almost surely has
the following properties.  These properties hold simultaneously for every
subgraph $G\subseteq\Gamma$.

\begin{enumerate}[label=\textup{(\roman*)}]
\item If $X,Y\subseteq V(\Gamma)$ are disjoint, $(X,Y)$ is
$(\varepsilon,d,p)$-lower-regular in $G$, and
$$
|X|\ge C_{\mathrm{inh}}\max{p^{-2},p^{-1}\log N},
\qquad
|Y|\ge C_{\mathrm{inh}}p^{-1}\log(eN/|X|),
$$
then all but at most $C_{\mathrm{inh}}p^{-1}\log(eN/|X|)$ vertices
$z\in V(\Gamma)$ are such that
$(N_\Gamma(z;X),Y)$
is $(\varepsilon',d,p)$-lower-regular in $G$.

\item If $X,Y\subseteq V(\Gamma)$ are disjoint, $(X,Y)$ is
$(\varepsilon,d,p)$-lower-regular in $G$, and
$$
|Y|\ge |X|\ge
C_{\mathrm{inh}}\max{p^{-2},p^{-1}\log N},
$$
then all but at most
$C_{\mathrm{inh}}\max{p^{-2},p^{-1}\log(eN/|X|)}$ vertices $z\in V(\Gamma)$ are such that
$(N_\Gamma(z;X),N_\Gamma(z;Y))
$
is $(\varepsilon',d,p)$-lower-regular in $G$.
\end{enumerate}
\end{theorem}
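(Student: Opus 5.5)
The plan is to derive the statement directly from the one-sided and two-sided inheritance lemmas of Allen, B\"ottcher, H\`an, Kohayakawa and Person~\cite[Lemmas~1.26 and~1.27]{ABHKP}, by harmonising their constants and taking a union of two asymptotically almost sure events. No new probabilistic argument should be needed. The work is bookkeeping: matching definitions, choosing common constants, and checking that the quantifier structure (\emph{simultaneously for every subgraph $G\subseteq\Gamma$}) is the one those lemmas provide.

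First I would check that the notions agree. Our $(\varepsilon,d,p)$-lower-regularity, namely $e_G(X',Y')\ge(d-\varepsilon)p|X'||Y'|$ for all $X'\subseteq X$, $Y'\subseteq Y$ with $|X'|\ge\varepsilon|X|$ and $|Y'|\ge\varepsilon|Y|$, is the one-sided regularity used in~\cite{ABHKP}. If their normalisation differs slightly, for instance in whether the density lower bound is $(d-\varepsilon)p$ or $dp$ with density $\ge d$, I would absorb the discrepancy by slightly shrinking $\varepsilon$ or adjusting $\varepsilon'$. I would also check that their neighbourhoods $N_\Gamma(z;X)$ are taken in the random graph $\Gamma$, while regularity is measured in the arbitrary subgraph $G$. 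This is exactly the ``envelope'' feature we need, and it is how their lemmas are phrased, since the a.a.s.\ event is defined purely in terms of $\Gamma$.

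Next, Lemma~1.26 (one-sided) gives, for each $\varepsilon',d$, a threshold $\varepsilon_1(\varepsilon',d)$ and a constant $C_1$ with the size hypotheses and exceptional-set bound of item~(i). Lemma~1.27 (two-sided) gives $\varepsilon_2(\varepsilon',d)$ and $C_2$ for item~(ii). I would set $\varepsilon_{\rm ABHKP}=\min\{\varepsilon_1,\varepsilon_2\}$ and $C_{\rm inh}=\max\{C_1,C_2\}$. Enlarging $C_{\rm inh}$ only strengthens the size hypotheses and weakens the exceptional-set conclusions, so both lemmas remain valid with the common constant. Shrinking the threshold only restricts the admissible $\varepsilon$. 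Because lower-regularity for smaller $\varepsilon$ implies it for larger, there is no loss in fixing any $\varepsilon<\varepsilon_{\rm ABHKP}$.

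Finally, for each fixed $\varepsilon<\varepsilon_{\rm ABHKP}$ and $p$, the two good events hold with probability $1-o(1)$. Their intersection therefore also holds asymptotically almost surely, and on it both~(i) and~(ii) hold for every $G\subseteq\Gamma$ at once. The only step I expect to require care is the first one. Some versions of these lemmas state the exceptional count as $C\max\{p^{-2},p^{-1}\log(eN/|X|)\}$ in the one-sided case as well, or impose additional size conditions on $|Y|$. In that case I would restate item~(i) with whatever (possibly weaker) bound the cited lemma gives, which suffices for the later verification of envelope-positive inheritance at candidate scale.
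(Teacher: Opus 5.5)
Your proposal is correct and matches the paper's own justification: the statement is \cite[Lemmas~1.26 and~1.27]{ABHKP} with the two admissible thresholds replaced by their minimum and the two constants replaced by their maximum, after which one intersects the two a.a.s.\ events, each already uniform over all subgraphs $G\subseteq\Gamma$ and all eligible disjoint $X,Y$. The paper states that only the constants were changed, so the fallback restatement of item~(i) at the end of your proposal should not be needed.
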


In \cite{ABHKP},  these are stated as two separate lemmas.  We have only replaced their
two sets of constants by common constants, namely the smaller admissible $\varepsilon_0$ and the larger constant $C$.  The a.a.s. events in the two
lemmas are already uniform over all subgraphs $G\subseteq\Gamma$ and all
eligible disjoint sets $X,Y\subseteq V(\Gamma)$.

\subsection{A uniform partite inheritance consequence}

We record the uniform partite consequence needed below.

\begin{corollary}[Uniform partite envelope inheritance]
\label{cor:uniform-partite-inheritance}
Fix $q\ge1$, $0<d\le1$, output and input parameters
$0<\varepsilon_{\rm out}<d$ and
$0<\varepsilon_{\rm in}<\varepsilon_{\rm ABHKP}(\varepsilon_{\rm out},d)$,
and an exceptional-set parameter $0<\zeta<1$.  There is $L>0$ such that
the following holds. Let $\Gamma$ be the random $q$-partite graph with parts
$U_1,\ldots,U_q$, each of size $M$, where every cross-edge is present
independently with probability $p$.  Let $m$ satisfy $m\ge Lp^{-2}\log M$.
Then, with probability tending to one as $M\to\infty$, the following hold
simultaneously for every subgraph $G\subseteq\Gamma$.

\begin{enumerate}[label=\textup{(\roman*)}]
\item Let $i,j,\ell\in[q]$ satisfy $i\ne j$ and $j\ne\ell$.  Let
$X\subseteq U_i$, $Y\subseteq U_j$, and $Z\subseteq U_\ell$ have sizes at
least $m$.  If
$(Y,Z)$
is $(\varepsilon_{\rm in},d,p)$-lower-regular in $G$, then all but at most
$\zeta |X|$ vertices $x\in X$ are such that
$(N_\Gamma(x;Y),Z)
$
is $(\varepsilon_{\rm out},d,p)$-lower-regular in $G$.

\item Let $i,j,\ell\in[q]$ be distinct.  Let
$X\subseteq U_i$, $Y\subseteq U_j$, and $Z\subseteq U_\ell$ have sizes at
least $m$.  If
$(Y,Z)$
is $(\varepsilon_{\rm in},d,p)$-lower-regular in $G$, then all but at most
$\zeta |X|$ vertices $x\in X$ are such that
$(N_\Gamma(x;Y),N_\Gamma(x;Z))
$
is $(\varepsilon_{\rm out},d,p)$-lower-regular in $G$.
\end{enumerate}
\end{corollary}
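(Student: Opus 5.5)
We plan to deduce \cref{cor:uniform-partite-inheritance} directly from \cref{thm:ABHKP-inheritance}, applied to $\Gamma$ viewed as a subgraph of $G(N,p)$ with $N=qM$. The key is to check the size hypotheses, and then to compare the absolute exceptional sets there with the relative bound $\zeta|X|$ required here.

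\textbf{Coupling step.} Realise the random $q$-partite graph on $U_1\cup\cdots\cup U_q$ as $\Gamma=\Gamma_0\setminus\bigcup_i\binom{U_i}{2}$, where $\Gamma_0=G(N,p)$. Every subgraph $G\subseteq\Gamma$ is also a subgraph of $\Gamma_0$. Moreover, for $x\in U_i$ and $Y\subseteq U_j$ with $j\ne i$, we have $N_\Gamma(x;Y)=N_{\Gamma_0}(x;Y)$. In case (ii), where $i,j,\ell$ are distinct, the same holds for $Z$. Hence the a.a.s.\ event of \cref{thm:ABHKP-inheritance} for $\Gamma_0$, with $\varepsilon'=\varepsilon_{\rm out}$ and $\varepsilon=\varepsilon_{\rm in}$, transfers verbatim to $\Gamma$ and to all its subgraphs. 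Lower-regularity statements refer only to $G$, so they are unaffected.

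\textbf{Size hypotheses.} In (i) we apply part (i) of the theorem with $X\leftarrow Y$, $Y\leftarrow Z$. These sets are disjoint because $j\ne\ell$. We need $|Y|\ge C_{\rm inh}\max\{p^{-2},p^{-1}\log N\}$ and $|Z|\ge C_{\rm inh}p^{-1}\log(eN)$. Both follow from $m\ge Lp^{-2}\log M$ once $L\ge 2qC_{\rm inh}$, using $\log N\le 2\log M$ for large $M$ and $p\le1$.

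In (ii), the theorem requires $|Y|\ge|X|$. By symmetry we swap the roles of $Y$ and $Z$, applying the theorem with the smaller set in the first slot. The conclusion $(N(z;Y),N(z;Z))$ is symmetric under this swap, so nothing is lost. The lower bound on the smaller set again follows from $m$.

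\textbf{Exceptional sets.} The theorem bounds the number of bad vertices $z\in V(\Gamma_0)$ by $E:=C_{\rm inh}\max\{p^{-2},p^{-1}\log(eN)\}\le 3qC_{\rm inh}p^{-2}\log M$. The bad vertices lying in $X$ are therefore at most $E$ in number. Since $|X|\ge m\ge Lp^{-2}\log M$, choosing $L\ge 3qC_{\rm inh}/\zeta$ gives $E\le\zeta|X|$.

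\textbf{Expected obstacle.} The only delicate point is the vertex $x\in X$ with $i=\ell$ in case (i). There $N_\Gamma(x;Z)$ is irrelevant, since only $N_\Gamma(x;Y)$ is used, so the identity of neighbourhoods is needed only into $U_j$ with $j\ne i$. We also need that the theorem's ``all but few $z\in V(\Gamma)$'' ranges over all vertices, including those of $X$. It does, because the theorem imposes no disjointness between $z$ and $X,Y$.

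The a.a.s.\ statement is in $N\to\infty$, equivalently $M\to\infty$. It holds uniformly for every $p$ in the given range and simultaneously over all $G$, $X$, $Y$, $Z$, which is exactly the required conclusion.
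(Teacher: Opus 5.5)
Your proposal is correct and follows essentially the same route as the paper. You apply \cref{thm:ABHKP-inheritance} to $G(qM,p)$ with the single lower-regular pair $(Y,Z)$, ordered by size in the two-sided case. You use that cross-part neighbourhoods agree with those in the partite model, including when $i=\ell$, and you absorb the absolute exceptional-set bound into $\zeta m\le\zeta|X|$ by enlarging $L$; your cruder $\log(eN)$ in place of the paper's $\log(eqM/m)$ is harmless here.
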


\begin{proof}
Apply \cref{thm:ABHKP-inheritance} with $N=qM$, input parameter
$\varepsilon_{\rm in}$, and output parameter $\varepsilon_{\rm out}$.
Since $q$ is fixed, after increasing $L$ the size condition
$m\ge Lp^{-2}\log M$ implies all size conditions appearing in
\cref{thm:ABHKP-inheritance}.  The exceptional sets in the one-sided and
two-sided conclusions have sizes at most
$C_{\mathrm{inh}}\max\{p^{-2},p^{-1}\log(e qM/m)\}
        \le \zeta m$
after increasing $L$.  Since every relevant choice set $X$ has size at
least $m$, this is at most $\zeta|X|$.

In both the one-sided and two-sided cases, the only lower-regular pair to which
we apply \cref{thm:ABHKP-inheritance} is $(Y,Z)$.  In the two-sided case,
before applying it to this pair, we order $Y$ and $Z$ so that the first
set has size at most the second; the conclusion is symmetric in the two
neighbourhoods.  The
$q$-partite model is obtained from $G(qM,p)$ by ignoring the edges inside
the parts.  The inheritance statements above involve only cross-edges between
distinct parts, so the same conclusion holds for the random partite model.  The
case $i=\ell$ in the one-sided conclusion is also allowed, because the
inherited pair $(N_\Gamma(x;Y),Z)$ then lies between $U_j$ and $U_i$,
which are distinct as $i\ne j$.
\end{proof}

\subsection{Verification of the host hypotheses}

Let $q,\Delta$ be fixed.  Let $\Gamma$ be the random $q$-partite graph
with parts $U_1,\ldots,U_q$, each of size $M$, where every cross-edge is
present independently with probability $p$.  Put
$K=p^\Delta M$.

\begin{proposition}[Random verification]
\label{prop:random-verification}
Fix $\Delta,q\ge1$, $0<d\le1$, and $0<\theta<1$.  There is a constant
$C_* = C_*(\Delta)$ such that the following holds for every prescribed
$C_{\rm good}\ge C_*$.  Let
$\boldsymbol\varepsilon=(\varepsilon_0,\ldots,\varepsilon_{2\Delta})$ and
$\boldsymbol\eta=(\eta_0,\ldots,\eta_{2\Delta-1})$ be positive parameters
such that $\varepsilon_h<\varepsilon_{\rm ABHKP}(\eta_h,d)$ and
$\varepsilon_h<d/2$ for every $0\le h<2\Delta$.  There is a constant
$L>0$, depending on the displayed parameters and on $C_{\rm good}$, such
that the following holds.  Assume
$K=p^\Delta M\ge Lp^{-2}\log M$.  Let
$\xi=\frac{L\log M}{K}$.  Then, with probability tending to one as
$M\to\infty$, the following hold simultaneously.

\begin{enumerate}[label=\textup{(\roman*)}]
\item For every subcollection of the parts of $\Gamma$, the induced subgraph on those parts is
$(\Delta,\theta,\xi,C_{\rm good})$-conflict-good at scale $M$.

\item For every subgraph $G\subseteq\Gamma$, the pair $G\subseteq\Gamma$ is
$(\Delta,\theta,\boldsymbol\varepsilon,\boldsymbol\eta,d,p)$-envelope-positively inheriting at scale $M$.
\end{enumerate}
\end{proposition}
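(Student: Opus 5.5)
The plan is to verify the two parts separately, each by a union bound over a polynomially controlled family of events, and then intersect the finitely many a.a.s.\ events. The key observation throughout is a size fact. If $\phi$ is candidate-scale with witness $W$ in part $U_i$, write it as $S\subseteq N_\Gamma(W;U_i)$. Then
$$|S|\ge\theta p^{|W|}M\ge\theta p^\Delta M=\theta K\ge \theta Lp^{-2}\log M .$$
So every candidate-scale set has size at least $m:=\theta K$, and $m\ge L'p^{-2}\log M$ once $L$ is large compared with $L'/\theta$. Restricting to a subcollection of parts does not affect any of the statements: all conditions concern sets inside the parts together with edges or neighbourhoods taken in $\Gamma$, and these are unchanged on the induced subgraph. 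Hence it suffices to prove (i) for $\Gamma$ itself, quantifying over all choices of parts.

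For part (ii), fix $h<2\Delta$. The first (degree) condition is deterministic. Suppose at least $\varepsilon_h|S_i|$ vertices $v\in S_i$ have $|N_G(v;S_j)|<\frac d2p|S_j|$. Together with $S_j$, these vertices span fewer than $\frac d2 p|X'||S_j|\le(d-\varepsilon_h)p|X'||S_j|$ edges, using $\varepsilon_h<d/2$. This contradicts $(\varepsilon_h,d,p)$-lower-regularity of $(S_i,S_j)$. For the one-sided and two-sided conditions, I will apply \cref{cor:uniform-partite-inheritance} with $\varepsilon_{\rm in}=\varepsilon_h$, $\varepsilon_{\rm out}=\eta_h$ and $\zeta=\varepsilon_h$, with $X=S_i$, $Y=S_j$, $Z=S_\ell$ all of size at least $m$. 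The hypothesis $\varepsilon_h<\varepsilon_{\rm ABHKP}(\eta_h,d)$ is exactly what the corollary requires. Enlarging $L$ to dominate the corollary's constants for each of the $2\Delta$ values of $h$ makes $m\ge Lp^{-2}\log M$ valid for all of them. A union over the finitely many $h$ finishes (ii), uniformly over all $G\subseteq\Gamma$.

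For the weighted overlap estimate \eqref{eq:weighted-overlap-good}, I will prove the stronger pointwise bound $|S_x\cap S_y|\le\xi|S_x||S_y|$, after which summing against the weights is trivial. In particular the bounded-incidence hypothesis is not needed. By Chernoff plus a union bound over the $M^{O(\Delta)}$ sets $W$ of at most $2\Delta$ vertices and the parts $i$, a.a.s.\ every such $W$ satisfies
$$|N_\Gamma(W;U_i)|\le C_*(p^{|W|}M+\log M).$$
Write $a=|W_x|$, $b=|W_y|$, $u=|W_x\cup W_y|$. Then $|S_x\cap S_y|\le |N_\Gamma(W_x\cup W_y;U_i)|$, while
$$\xi|S_x||S_y|\ge\xi\theta^2p^{a+b}M^2.$$
Both error terms are then dominated. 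For the logarithmic term, $\xi\theta^2p^{a+b}M^2\ge\xi\theta^2K^2=L\theta^2K\log M\gg\log M$. For the main term, $a+b-u\le\min(a,b)\le\Delta$, so $\xi\theta^2p^{a+b}M^2\ge p^uM\cdot \xi\theta^2 K= p^uM\cdot L\theta^2\log M$. Both comparisons hold for $L$ large in terms of $\theta$ and $C_*$.

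For the edge-load estimate \eqref{eq:edge-load-good}, I will show that a.a.s.\ every pair of disjoint sets $X\subseteq U_i$, $Y\subseteq U_j$ with $|X|,|Y|\ge m$ satisfies
$$e_\Gamma(X,Y)\le C_*\bigl(p|X||Y|+(|X|+|Y|)\log M\bigr).$$
Since $\min(|X|,|Y|)\ge\theta K=\theta L\log M/\xi$, we have $(|X|+|Y|)\log M\le 2\xi|X||Y|/(\theta L)\le\xi|X||Y|$, which gives the bound with $C_{\rm good}\ge C_*$. This uniform edge count is the step I expect to be the main (if standard) obstacle, because the union is over all subsets rather than polynomially many. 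I will fix the sizes $s=|X|$, $t=|Y|$ and apply the tail $\Prb(e(X,Y)\ge T)\le(e\mu/T)^T\le e^{-T}$ for $T\ge e^2\mu$, with $\mu=pst$ (the estimate of \cref{fact:large-deviation} type). Taking $T=C_*(pst+(s+t)\log M)$, this beats the number $\binom Ms\binom Mt\le e^{(s+t)\log M}$ of choices, with room to spare for summing over $s,t$ and the pairs of parts. Since the estimate holds for all pairs of large sets, it holds in particular for candidate-scale sets, whatever their witnesses are.
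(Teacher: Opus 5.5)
Your proposal is correct. Part (ii) is exactly the paper's argument: the degree condition is deterministic from $\varepsilon_h<d/2$, and the other two conditions come from \cref{cor:uniform-partite-inheritance} with $\varepsilon_{\rm in}=\varepsilon_h$, $\varepsilon_{\rm out}=\eta_h$, $\zeta=\varepsilon_h$ and $m=\theta K$. Part (i) follows a different route from the paper in two places.

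\textbf{Weighted overlap.} The paper uses no randomness here. For candidate-scale $S_x,S_y$ in the same part,
\[
\frac{|S_x\cap S_y|}{|S_x||S_y|}\le \frac{1}{\min\{|S_x|,|S_y|\}}\le \frac{1}{\theta K},
\]
and $1/(\theta K)\le L\log M/K=\xi$ as soon as $\theta L\log M\ge 1$. Your codegree bound, obtained by Chernoff and a union bound over witness sets of size at most $2\Delta$, is valid, and the bookkeeping $a+b-u\le\Delta$ is right. It buys nothing, however. When $W_x=W_y$, your bound is of order $1/(\theta^2K)$, which is no better than the trivial one. Moreover, the case $S_x=S_y$ with $|S_x|=\theta K$ shows that $1/(\theta K)$ cannot be improved uniformly.

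\textbf{Edge-load.} The paper proves the cleaner bound $e_\Gamma(S,S')\le C_{\rm good}\,p|S||S'|$. It compares the Chernoff exponent $\Omega(C_{\rm good}\,pst)$ with the entropy $O\bigl(st(\theta K)^{-1}\log M\bigr)$. This needs $p\theta K\gg\log M$, which follows from $K\ge Lp^{-2}\log M$. You instead allow an additive $(s+t)\log M$ error. You then absorb it into the $\xi|X||Y|$ slack already present in \eqref{eq:edge-load-good}, using $\min\{|X|,|Y|\}\ge\theta K=\theta L\log M/\xi$. Your version needs only $\theta L\ge 2$ rather than $pK\gg\log M$. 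The paper's version yields a pure $p$-bound. Either suffices, and both give an absolute constant $C_*$.
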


\begin{proof}
We first prove conflict-goodness.  Choose the absolute constant $C_*$ large
enough for the edge-load estimate below.  For edge-load, it is enough to show
that, with high probability, for all
distinct parts $U_i,U_j$ and all sets
$S\subseteq U_i$, $S'\subseteq U_j$ with $|S|,|S'|\ge\theta K$, one has
\begin{equation}
        e_\Gamma(S,S')\le C_{\rm good}p|S||S'|.
        \label{eq:random-edge-load}
\end{equation}
For fixed $S,S'$, Chernoff's bound gives
$\Prb\bigl(e_\Gamma(S,S')>C_{\rm good}p|S||S'|\bigr)
        \le \exp(-cC_{\rm good}p|S||S'|)$. 
The number of choices of sets of sizes $s,t$ is at most
$\exp\{s\log(eM/s)+t\log(eM/t)\}$.  Since $s,t\ge\theta K$, we have
$$
        s\log(eM/s)+t\log(eM/t)
        \le 2(s+t)\log(eM/(\theta K))
        \le 4st(\theta K)^{-1}\log(eM).
$$
On the other hand, the Chernoff exponent is $\Omega(C_{\rm good}pst)$.  The
last displayed entropy bound is therefore dominated as soon as
$pK\gg_{\theta,C_{\rm good}}\log M$.  This follows from the stronger
assumption $K\ge Lp^{-2}\log M$, after increasing $L$, since
$pK\ge Lp^{-1}\log M\ge L\log M$.  A union bound over $s,t$ and over
pairs of parts proves \eqref{eq:random-edge-load}, and hence the edge-load
estimate for every induced subgraph on a subcollection of the parts.

For weighted overlap, no randomness is needed.  If $S_x$ and $S_y$ are in
the same part and are candidate-scale, then
$$
        \frac{|S_x\cap S_y|}{|S_x||S_y|}
        \le
        \frac1{\min\{|S_x|,|S_y|\}}
        \le
        \frac1{\theta K}.
$$
Increasing $L$, we may assume $1/(\theta K)\le \xi$.  Multiplying by the
weights $\omega_y$ and summing over $y$ gives the weighted overlap estimate.

It remains to verify envelope-positive inheritance.  The degree condition is
deterministic.  If $(S_i,S_j)$ is $(\varepsilon_h,d,p)$-lower-regular in
$G$, and more than $\varepsilon_h |S_i|$ vertices of $S_i$ have degree
less than $(d/2)p|S_j|$ into $S_j$, then those vertices together with
$S_j$ contradict lower-regularity, since $\varepsilon_h<d/2$.

For one-sided and two-sided envelope inheritance, apply
\cref{cor:uniform-partite-inheritance} for each $0\le h<2\Delta$, with
input parameter $\varepsilon_h$, output parameter $\eta_h$, exceptional
parameter $\zeta=\varepsilon_h$, and $m=\theta K$.  Since every
candidate-scale set has size at least $\theta K$, and since
$\theta K\ge \theta Lp^{-2}\log M$, increasing $L$ by a factor depending on
$\theta$ and on the finite hierarchy makes the hypotheses of
\cref{cor:uniform-partite-inheritance} valid for all candidate-scale triples.
The corollary is uniform over all subsets of the parts of size at least $m$
and over all subgraphs $G\subseteq\Gamma$, so it gives the desired
inheritance simultaneously for all relevant triples and all $h$.  This
completes the proof.
\end{proof}

Note that the random verification requires
$K=p^\Delta M\ge Lp^{-2}\log M$,
equivalently
$p^{\Delta+2}M\ge L\log M$.  In the Ramsey application we fix a small
constant $c_*>0$ and take
$p=\frac{c_*}{n\log n}$, and $ M=C_0 n^{\Delta+2}(\log n)^A$.
Then,
$p^2K
        =
        \Theta_{\Delta,r}\bigl(
        C_0c_*^{\Delta+2}(\log n)^{A-\Delta-2}
        \bigr)$.
Thus, $p^2K\ge L\log M$ after taking, for instance,
$A\ge\Delta+3$, and then choosing $C_0$ sufficiently large depending on
$c_*$ and the fixed parameters.

\section{The induced Ramsey application}
\label{sec:ramsey-application}

We now derive the induced Ramsey bound from the conflict-stable embedding
theorem and the random verification established in the previous section.

We first prove the elementary buffer lemma needed to apply
\cref{thm:conflict-stable-embedding}. To state the latter formally, we introduce the following definition. We say that a separated buffer $B\subseteq V(H)$ is $\alpha$-balanced, with respect
to the assignment $\chi$, if
$|B\cap \chi^{-1}(i)|\ge \alpha|\chi^{-1}(i)|$ for every $i\in[k]$. 

\begin{lemma}[Balanced separated buffers]
\label{lem:balanced-separated-buffer}
Fix $\Delta,k\ge1$.  There is $\alpha=\alpha(\Delta,k)>0$ such that the
following holds for all sufficiently large $n$.  Let $H$ be an $n$-vertex
graph with $\Delta(H)\le\Delta$, and let
$\chi:V(H)\to[k]$ be an equitable proper colouring.  Then $H$ contains an
$\alpha$-balanced separated buffer with respect to $\chi$.
\end{lemma}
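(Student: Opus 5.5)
The plan is to pass to the square graph and build the buffer greedily. Let $H^2$ be the graph on $V(H)$ in which two distinct vertices are adjacent when their distance in $H$ is at most two. Then $\Delta(H^2)\le \Delta+\Delta(\Delta-1)=\Delta^2$. A separated buffer is exactly an independent set in $H^2$, so we need an independent set in $H^2$ that meets every colour class $V^i:=\chi^{-1}(i)$ in a constant proportion of its vertices. Since $\chi$ is equitable, each class satisfies $|V^i|\ge \lfloor n/k\rfloor\ge n/(2k)$ once $n\ge 2k$.

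Build the buffer greedily, one class at a time. Start with $B_0=\varnothing$ and proceed for $i=1,\ldots,k$. Let $B_{i-1}$ be the current set, and call a vertex of $V^i$ \emph{blocked} if it lies in $B_{i-1}$ or is adjacent in $H^2$ to a vertex of $B_{i-1}$. In the unblocked part of $V^i$, pick a set $T_i$ that is independent in $H^2$ by a greedy choice, and put $B_i=B_{i-1}\cup T_i$. The set $B=B_k$ is then independent in $H^2$, so it is separated.

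The only real step is to show that $T_i$ is large. We want to take only a controlled number of vertices from each class, so we cap the size: choose $T_i$ with $|T_i|\le \beta|V^i|$, where $\beta>0$ is fixed later. Then $|B_{i-1}|\le \beta\sum_{j<i}|V^j|\le \beta n$. The number of blocked vertices of $V^i$ is at most $(\Delta^2+1)|B_{i-1}|\le(\Delta^2+1)\beta n\le 2k(\Delta^2+1)\beta|V^i|$. Take $\beta=1/(4k(\Delta^2+1)^2)$. Then at least half of $V^i$ is unblocked. Greedy selection inside the unblocked set $A_i$ yields an $H^2$-independent set of size at least $|A_i|/(\Delta^2+1)\ge |V^i|/(2(\Delta^2+1))\ge\beta|V^i|$. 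We can therefore achieve $|T_i|=\lceil\beta|V^i|\rceil$, up to rounding and provided $\beta|V^i|\ge1$, which holds for $n$ large. This gives the lemma with $\alpha=\beta=\alpha(\Delta,k)$.

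I do not expect a genuine obstacle here. The only delicate point is making the cap consistent: the per-class cap has to be small enough that earlier classes block only a fraction of later ones, and this is exactly what the choice of $\beta$ arranges, using equitability. The ceiling in $|T_i|$ should be handled by keeping $|B_{i-1}|\le 2\beta n$, which holds for large $n$, with $\beta$ halved if necessary.
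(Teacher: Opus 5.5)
Your proposal is correct and follows essentially the same route as the paper: build an independent set in $H^2$ (maximum degree at most $\Delta^2$) greedily, capping each class at roughly $\alpha|\chi^{-1}(i)|$ vertices, and use equitability to show that the at most $(\Delta^2+1)|B|$ blocked vertices are only a small fraction of each class. The differences are cosmetic. You fill the classes one at a time by a greedy independent set inside the unblocked part, whereas the paper adds single vertices until each class reaches $\lfloor 2\alpha|X_i|\rfloor$. Your handling of the rounding, via $|B_{i-1}|\le 2\beta n$, is adequate.
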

\begin{proof}
Let $D=\Delta^2+1$.  The square $H^2$ has maximum degree at most $D-1$.
Choose $\alpha=1/(8kD)$.  We greedily construct $B$.  For each colour
class $X_i=\chi^{-1}(i)$, keep adding vertices of $X_i$ until
$|B\cap X_i|\ge \lfloor 2\alpha |X_i|\rfloor$.  During the construction, the
vertices forbidden by the current set $B$ are those within distance two of a
vertex already chosen.  Each chosen vertex forbids at most $D$ vertices in a
fixed colour class.  Before the construction terminates, we have
$|B|\le 2\alpha n$, and hence the number of forbidden vertices in any
$X_i$ is at most
$D|B|\le 2D\alpha n=\frac{n}{4k}$.
As $\chi$ is equitable, $|X_i|\ge n/(2k)$ for all sufficiently large $n$.
Thus, whenever $|B\cap X_i|<\lfloor 2\alpha |X_i|\rfloor$, some vertex of
$X_i$ remains available and can be added to $B$.  Continuing in this way,
we obtain a set $B$ with $|B\cap X_i|\ge \alpha |X_i|$ for every $i$,
for all sufficiently large $n$, and no two vertices of $B$ are at distance
at most two in $H$.  Thus $B$ is separated and $\alpha$-balanced.
\end{proof}

We shall also use the following elementary finite Ramsey selection principle.

\begin{fact}[Dense multipartite Ramsey selection]
\label{fact:dense-multipartite-ramsey-selection}
For every $k,r\ge1$ there are constants $\zeta_0>0$ and $Q$ such that the
following holds.  Let $R$ be a graph whose vertex set is partitioned into
$q\ge Q$ non-empty classes of equal size.  Suppose that at most a
$\zeta_0$-fraction of all cross-pairs of vertices of $R$ are non-edges.
Then every $r$-colouring of $E(R)$ contains a monochromatic $K_k$ whose
vertices lie in distinct classes.
\end{fact}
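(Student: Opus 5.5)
This is a classical Ramsey–Turán type statement, and I would prove it by combining Ramsey's theorem with a supersaturation/cleaning argument on the reduced structure of classes. Set $N=R_r(k)$, the $r$-colour Ramsey number of $K_k$, and take $Q\ge N$ and $\zeta_0$ small in terms of $N$ (something like $\zeta_0=1/(2\binom{N}{2})$ should do). The argument has two steps: first find $N$ vertices in distinct classes spanning a clique in $R$, then apply Ramsey's theorem inside that clique.

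For the first step, I would choose one vertex uniformly at random from each of $N$ distinct classes. To do this, pick $N$ distinct class indices uniformly at random, and then pick a uniform vertex within each chosen class. The resulting pair of selected vertices from two distinct classes is then a uniformly random cross-pair, because all classes have equal size. Hence each of the $\binom N2$ selected pairs is a non-edge with probability at most $\zeta_0$. By the union bound, the probability that some selected pair is a non-edge is at most $\binom N2\zeta_0<1$. Therefore some choice yields $N$ vertices in distinct classes that span a complete graph $K_N$ in $R$.

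For the second step, the $r$-colouring of $E(R)$ restricts to an $r$-colouring of this $K_N$. By the choice $N=R_r(k)$, Ramsey's theorem gives a monochromatic $K_k$, and its vertices lie in distinct classes because all $N$ chosen vertices do.

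The only point needing care is the uniformity claim in the first step. The fraction of non-edges is measured over all cross-pairs. When the classes have equal size $s$, the cross-pairs between any two fixed classes number $s^2$, so the whole set of cross-pairs is a union of equal-sized blocks indexed by unordered pairs of classes. A uniform random pair of distinct classes together with uniform vertices inside them therefore gives exactly the uniform distribution on cross-pairs. I expect no real obstacle beyond this; the equal-size hypothesis is used exactly here, and $Q\ge N$ is needed only so that $N$ distinct classes exist.
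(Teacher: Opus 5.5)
Your proof is correct. It shares the paper's two-stage architecture: first find a clique of Ramsey size with its vertices in distinct classes, then apply the multicolour Ramsey theorem inside it. The clique-finding step, however, is done differently.

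The paper picks one uniform vertex from \emph{every} one of the $q$ classes. It observes that the expected number of missing edges in this $q$-vertex transversal is at most $\zeta_0\binom q2$. It then uses Tur\'an's theorem to extract a $K_m$ from a transversal of edge density at least $1-\zeta_0$, which requires $\zeta_0<1/(2m)$ and $q$ sufficiently large.

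You instead sample only $N=R_r(k)$ classes and apply a first-moment (union) bound directly to the $\binom N2$ sampled pairs. A complete $K_N$ therefore appears with positive probability as soon as $\binom N2\zeta_0<1$.

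Your route avoids Tur\'an's theorem and works for every $q\ge N$, not just for large $q$. The price is a tolerable non-edge density $\zeta_0$ of order $N^{-2}$, compared with the paper's order $N^{-1}$. In the application $\zeta_0$ and $Q$ need only depend on $k$ and $r$, so either version suffices.

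Your uniformity remark is exactly the right point to check. Equal class sizes make a uniformly random pair of distinct classes, with independent uniform vertices inside them, a uniformly random cross-pair. The paper's expectation bound for its full transversal relies on the same fact.
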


\begin{proof}
Choose $m$ such that every $r$-colouring of $K_m$ contains a
monochromatic $K_k$.  We shall first find a copy of $K_m$ with vertices in
$m$ distinct classes.  Choose one vertex independently and uniformly from each
class.  Since the classes have equal size, the expected number of missing edges
among the chosen vertices is at most $\zeta_0\binom q2$.  Hence there is a
transversal on $q$ vertices spanning at most $\zeta_0\binom q2$ missing
edges.  If $\zeta_0<1/(2m)$, then, for all sufficiently large $q$, Tur\'an's
theorem shows that this transversal contains a clique of order $m$.  The vertices of this clique lie in distinct classes of $R$.
Applying the ordinary multicolour Ramsey theorem to the colouring induced on
this $K_m$ gives a monochromatic $K_k$, again with vertices in distinct
classes.
\end{proof}

The next lemma is the sparse regularity input in the exact form in which it is
used below.  It is a finite simultaneous partite refinement statement.  The
point of recording it separately is that the colour-focusing argument below
uses no further regularity machinery.

If $F\subseteq\Gamma$ and $X,Y$ are disjoint vertex sets, write
$d_{F,p}(X,Y)=\frac{e_F(X,Y)}{p|X||Y|}$.
The pair $(X,Y)$ is $(\varepsilon,p)$-\emph{regular} in $F$ if, whenever
$X'\subseteq X$ and $Y'\subseteq Y$ satisfy
$|X'|\ge\varepsilon |X|$ and $|Y'|\ge\varepsilon |Y|$, we have
$|d_{F,p}(X',Y')-d_{F,p}(X,Y)|\le \varepsilon$.

\begin{lemma}[Simultaneous partite sparse regular refinement]
\label{lem:simultaneous-partite-sparse-regularity}
Fix integers $q_0,r\ge1$ and parameters
$0<\varepsilon_{\rm reg},\zeta<1/10$.  There are constants
$\mu_{\rm reg}>0$ and $\nu_{\rm reg}>0$, depending only on
$q_0,r,\varepsilon_{\rm reg},\zeta$, such that the following holds.  Let
$\Gamma$ be a $q_0$-partite graph with parts
$U_1,\ldots,U_{q_0}$, each of size $M$.  Suppose that, for every pair of
 distinct parts $U_a,U_b$ and all
$X\subseteq U_a$, $Y\subseteq U_b$ with
$|X|,|Y|\ge\nu_{\rm reg}M$, one has
$e_\Gamma(X,Y)\le 2p|X||Y|$.
Then every $r$-colouring of $E(\Gamma)$, with colour graphs
$G_1,\ldots,G_r$, admits an integer $t\ge1$ and partitions
$U_i=U_i^0\cup U_i^1\cup\cdots\cup U_i^t$, $i\in[q_0]$,
with the following properties.
\begin{enumerate}[label=\textup{(\roman*)}]
\item $|U_i^0|\le\varepsilon_{\rm reg}M$ for every $i$.
\item All non-exceptional clusters $U_i^a$, $i\in[q_0]$,
$a\in[t]$, have one common size $m$, and
$\mu_{\rm reg}M\le m\le2\mu_{\rm reg}M$.
\item For all but at most a $\zeta$-proportion of the cross-cluster pairs
$(U_i^a,U_j^b)$, with $i\ne j$, the pair is
$(\varepsilon_{\rm reg},p)$-regular in each of the colour graphs
$G_1,\ldots,G_r$.
\end{enumerate}
\end{lemma}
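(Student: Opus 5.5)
We will prove \cref{lem:simultaneous-partite-sparse-regularity} by the standard energy-increment (index-increment) argument for sparse regularity, in the form of Kohayakawa--R\"odl, applied simultaneously to the $r$ colour graphs and run inside each of the $q_0$ parts separately.

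\emph{Bounded index.} Consider partitions of each part $U_i$ into an exceptional set $U_i^0$ and clusters $U_i^1,\ldots,U_i^t$ of one common size $m$. For such a partition define the index
\[
\operatorname{ind}(\mathcal P)=\sum_{c=1}^r\sum_{i\ne j}\sum_{a,b}\frac{|U_i^a||U_j^b|}{M^2}\,d_{G_c,p}(U_i^a,U_j^b)^2 .
\]
The upper-uniformity hypothesis $e_\Gamma(X,Y)\le 2p|X||Y|$ for $|X|,|Y|\ge\nu_{\rm reg}M$ gives $d_{G_c,p}\le 2$ on all pairs of size at least $\nu_{\rm reg}M$. Hence the index is bounded by $4rq_0^2$, provided every cluster produced during the iteration has size at least $\nu_{\rm reg}M$. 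This is the only place the host hypothesis enters, and it is what replaces boundedness of densities in the dense setting.

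\emph{Refinement step.} Start from the trivial partition, with each $U_i$ a single cluster. Suppose more than a $\zeta$-proportion of the cross-cluster pairs fail to be $(\varepsilon_{\rm reg},p)$-regular in some $G_c$. For each irregular pair, take witness subsets $X',Y'$ of relative size at least $\varepsilon_{\rm reg}$ whose density deviates by more than $\varepsilon_{\rm reg}$. Refine every cluster by the common Venn diagram of all witness sets lying in it. By the defect form of Cauchy--Schwarz, each irregular pair contributes an increment of order $\varepsilon_{\rm reg}^{4}$ times its weight. Here again we need the witness sets and the Venn cells to have size at least $\nu_{\rm reg}M$, so that the densities stay at most $2$. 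Summing over the irregular pairs, the index increases by at least $\zeta\varepsilon_{\rm reg}^{5}/(\text{const}\cdot q_0^{2})$, up to a loss from the exceptional-set cleanup.

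\emph{Equalising and bookkeeping.} After each refinement, chop every Venn cell into pieces of a common size $m'$ and move the leftovers into $U_i^0$. This increases the exceptional set by at most $\varepsilon_{\rm reg}M/(2s)$ at step $s$, keeping the total below $\varepsilon_{\rm reg}M$, and it decreases the index by only a negligible amount. The index is bounded, so the process stops after $s_{\max}=O_{q_0,r,\varepsilon_{\rm reg},\zeta}(1)$ steps. The number of clusters at each step is bounded by a tower-type function of $s$. Set $\mu_{\rm reg}$ to be half the smallest possible cluster size relative to $M$ after $s_{\max}$ steps, adjusting so that the common size $m$ lies in $[\mu_{\rm reg}M,2\mu_{\rm reg}M]$. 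Then choose $\nu_{\rm reg}$ smaller than $\varepsilon_{\rm reg}\mu_{\rm reg}/4^{s_{\max}}$, so that every set appearing in the argument has size at least $\nu_{\rm reg}M$.

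The main obstacle is this last consistency of constants. We must check that $\nu_{\rm reg}$, which appears in the hypothesis, can be chosen after $\mu_{\rm reg}$ and $s_{\max}$, and that the index bound stays uniform even though irregularity witnesses and leftover pieces can be small relative to clusters. We handle this by measuring all densities only on sets of size at least $\varepsilon_{\rm reg}^{2}\mu_{\rm reg}M$ and absorbing smaller leftovers into $U_i^0$. Since $M$ is arbitrary but the constants depend only on $q_0,r,\varepsilon_{\rm reg},\zeta$, the conclusion then follows.
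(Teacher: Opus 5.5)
Your route is the one the paper intends. The paper gives no argument of its own: it invokes the energy-increment proof of Kohayakawa's sparse regularity lemma, started from $U_1\cup\cdots\cup U_{q_0}$ and run simultaneously for $G_1,\ldots,G_r$. Your index bound, the $\varepsilon_{\rm reg}^4$ defect increment per irregular pair, the bound on the number of steps, and choosing $\nu_{\rm reg}$ last are all fine.

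The genuine gap is the upper bound $m\le 2\mu_{\rm reg}M$ in (ii). Your iteration stops at a colouring-dependent step $s\le s_{\max}$, so the output size ranges from tower-small up to $M$. If the trivial partition already satisfies (iii), you stop at once with $t=1$ and $m=M$. That is compatible with (ii) only if $\mu_{\rm reg}\ge 1/2$, whereas colourings that need many steps give tower-small $m$. Choosing $\mu_{\rm reg}$ from the smallest possible size therefore gives only a lower bound on $m$, and the ``adjusting'' step has no mechanism behind it:
\begin{itemize}
\item Cutting the final clusters into pieces of relative size $\rho\approx\mu_{\rm reg}M/m$, which can be as small as $\mu_{\rm reg}$, fails. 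Slicing only turns $(\varepsilon_{\rm reg},p)$-regularity into regularity with parameter of order $\varepsilon_{\rm reg}/\rho$.
\item You cannot pre-compensate by starting from a smaller $\varepsilon$, since $\mu_{\rm reg}$ then shrinks as well.
\item Continuing to refine up to step $s_{\max}$ regardless also fails, because arbitrary refinements of regular pairs need not be regular.
\end{itemize}

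Closing this needs an extra ingredient. One option is an inheritance statement: a random equitable refinement of an $(\varepsilon,p)$-regular pair into pieces of prescribed relative size is mostly $(\varepsilon',p)$-regular, with $\varepsilon$ depending only on $\varepsilon'$. In the sparse setting this is a Gerke--Kohayakawa--R\"odl--Steger-type result, not a formality. The other option is to weaken (ii) to a common size $m\ge\mu_{\rm reg}M$. That is all the standard lemma cited by the paper provides (it gives $t_0\le t\le T_0$). It also suffices downstream: the sets $V_i$ from the colour-focusing lemma all have the same size $m$, so the embedding theorem can be applied at scale $M'=m$ with $\kappa=1$.

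Two smaller repairs are needed.
\begin{itemize}
\item Your exceptional-set budget $\varepsilon_{\rm reg}M/(2s)$ per step sums to $\frac12\varepsilon_{\rm reg}M\sum_{s\le s_{\max}}1/s$, which exceeds $\varepsilon_{\rm reg}M$ once $s_{\max}\ge4$. Use $2^{-s-1}\varepsilon_{\rm reg}M$ instead, and discard surplus clusters so that every part has the same $t$.
\item Venn cells cannot be forced to have size at least $\nu_{\rm reg}M$, but you do not need this. The increment for an irregular pair $(X,Y)$ comes from convexity plus the variance identity applied to the single large witness block $X'\times Y'$. The clean-up loss is controlled by $e_\Gamma(L,Y)\le e_\Gamma(L',Y)\le 2p|L'||Y|$ for a superset $L'\supseteq L$ of size $\nu_{\rm reg}M$. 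So densities of small sets never enter.
\end{itemize}
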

This is the standard sparse regularity lemma in the form of
Kohayakawa~\cite[Theorem~1]{KR}, applied with the initial partition
$U_1\cup\cdots\cup U_{q_0}$, together with its usual multicolour extension; see also the work of 
Allen, Böttcher, Hàn, Kohayakawa, and Person~\cite[Lemma~6.3]{ABHKP}
for a numbered coloured formulation.  Since the number $r$ of colour graphs
and the initial number $q_0$ of parts are fixed, the usual energy-increment
proof gives common constants depending only on
$q_0,r,\varepsilon_{\rm reg},\zeta$.

The following consequence is the colour-focusing input.  It says that, inside a
sufficiently uniform sparse partite graph, every edge-colouring contains a
monochromatic sparse lower-regular $k$-partite block.  We use it with
$k=\Delta+1$.

\begin{lemma}[Sparse colour-focusing lemma]
\label{lem:sparse-colour-focusing}
Fix $k,r\ge1$, $0<d_0\le 1/(8r)$, and $0<\varepsilon_0<d_0$.  There are constants
$q$, $\mu>0$, $\nu>0$, and $\eta>0$, depending only on
$k,r,d_0,\varepsilon_0$, such that the following holds. Let $\Gamma$ be a $q$-partite graph with parts
$U_1,\ldots,U_q$, each of size $M$.  Suppose that for every pair of
distinct parts $U_a,U_b$, and every
$X\subseteq U_a$, $Y\subseteq U_b$ with
$|X|,|Y|\ge \nu M$,
we have
\begin{equation}\label{eq: reg}
(1-\eta)p|X||Y|
        \le e_\Gamma(X,Y)
        \le (1+\eta)p|X||Y|.   
\end{equation}
Then every $r$-colouring of $E(\Gamma)$ contains distinct indices
$a_1,\ldots,a_k\in[q]$, subsets
$V_i\subseteq U_{a_i}$, $\mu M\le |V_i|\le 2\mu M$, and a colour $s\in[r]$, such that the colour-$s$ graph $G$ satisfies that
every pair $(V_i,V_j)$, $i\ne j$, is
$(\varepsilon_0,d_0,p)$-lower-regular in $G$.
\end{lemma}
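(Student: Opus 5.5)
The plan is to combine \cref{lem:simultaneous-partite-sparse-regularity} with \cref{fact:dense-multipartite-ramsey-selection}, following the standard reduced-graph argument. First choose the constants. Let $\zeta_0$ and $Q$ be given by \cref{fact:dense-multipartite-ramsey-selection} for $k$ and $r$. Set $q=\max\{Q,k\}$ (enlarged if needed), and take $\varepsilon_{\rm reg}\ll\min\{\varepsilon_0,\zeta_0,d_0\}$ and $\zeta\le\zeta_0/4$. Apply \cref{lem:simultaneous-partite-sparse-regularity} with $q_0=q$, which yields $\mu_{\rm reg}$ and $\nu_{\rm reg}$. Put $\mu=\mu_{\rm reg}$, $\nu\le\min\{\nu_{\rm reg},\varepsilon_{\rm reg}\mu_{\rm reg}\}$, and $\eta\le\min\{1,\varepsilon_{\rm reg}\}$. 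The upper half of \eqref{eq: reg} gives $e_\Gamma(X,Y)\le 2p|X||Y|$, so the lemma applies to any $r$-colouring and produces clusters $U_i^a$ of common size $m\in[\mu M,2\mu M]$.

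Next, form the reduced graph $R$. Its vertices are the clusters $U_i^a$ with $a\in[t]$, grouped into the $q$ classes $\{U_i^a:a\in[t]\}$, which are non-empty and of equal size $t$. Make $U_i^a U_j^b$ an edge ($i\ne j$) when the pair is $(\varepsilon_{\rm reg},p)$-regular in every colour graph. By (iii), at most a $\zeta$-fraction of cross-pairs are non-edges. Colour each edge by some $s$ with $d_{G_s,p}(U_i^a,U_j^b)\ge d_0+\varepsilon_{\rm reg}$. Such an $s$ exists by the lower half of \eqref{eq: reg}: since $m\ge\nu M$, we have $\sum_s d_{G_s,p}=d_{\Gamma,p}\ge1-\eta\ge 1/2$, so some colour has density at least $1/(2r)\ge 4d_0\ge d_0+\varepsilon_{\rm reg}$. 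Then \cref{fact:dense-multipartite-ramsey-selection} gives a monochromatic $K_k$ in colour $s$ whose clusters $U_{a_1}^{b_1},\dots,U_{a_k}^{b_k}$ lie in distinct classes. Set $V_i=U_{a_i}^{b_i}$; the indices $a_i$ are then distinct.

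Finally, convert regularity into lower-regularity. Each pair $(V_i,V_j)$ is $(\varepsilon_{\rm reg},p)$-regular in $G_s$ with density at least $d_0+\varepsilon_{\rm reg}$. Hence every pair of subsets of relative size at least $\varepsilon_{\rm reg}\le\varepsilon_0$ has $G_s$-density at least $d_0\ge d_0-\varepsilon_0$. This is exactly $(\varepsilon_0,d_0,p)$-lower-regularity.

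The main obstacle is a bookkeeping one. The constants from \cref{lem:simultaneous-partite-sparse-regularity} must be fixed before $\nu$ and $\eta$, and the cluster size $m$ must be at least $\nu M$ so that \eqref{eq: reg} applies to cluster pairs; this is why $\nu\le\mu_{\rm reg}$ is required. Apart from that, the argument is routine.
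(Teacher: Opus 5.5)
Your proposal is correct and follows essentially the same route as the paper: a regular refinement via \cref{lem:simultaneous-partite-sparse-regularity}, a reduced graph on clusters whose edges are coloured by a dense colour, \cref{fact:dense-multipartite-ramsey-selection} to obtain a monochromatic transversal $K_k$, and the conversion of regularity plus density into $(\varepsilon_0,d_0,p)$-lower-regularity, with $\nu\le\min\{\nu_{\rm reg},\varepsilon_{\rm reg}\mu_{\rm reg}\}$ as in the paper. The only cosmetic difference is that you colour reduced edges by a colour of density at least $d_0+\varepsilon_{\rm reg}$, using $(1-\eta)/r\ge 1/(2r)\ge 4d_0$, whereas the paper records the equivalent inequality $(1-2\eta)/r-\varepsilon_{\rm reg}\ge d_0-\varepsilon_0$.
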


\begin{proof}
Let $\zeta_0$ and $Q$ be supplied by
\cref{fact:dense-multipartite-ramsey-selection} for the present $k$ and $r$.
Choose
$0<\varepsilon_{\rm reg}\ll \zeta_0,\varepsilon_0$ and $0<\eta\ll \varepsilon_{\rm reg}, d_0$, so that
\begin{equation}
        \frac{1-2\eta}{r}-\varepsilon_{\rm reg}
        \ge d_0-\varepsilon_0 .
        \label{eq:colour-density-to-lower}
\end{equation}
This is possible because $d_0\le1/(8r)$.  Choose an integer $q\ge Q$, and
apply \cref{lem:simultaneous-partite-sparse-regularity} with parameters
$q_0=q$, $r$, $\varepsilon_{\rm reg}$, and $\zeta=\zeta_0/4$.  Let
$\mu_{\rm reg}$ and $\nu_{\rm reg}$ be the constants supplied by that lemma,
and finally choose
$\nu\le \min\{\nu_{\rm reg},\varepsilon_{\rm reg}\mu_{\rm reg}\}$.
Then the upper bound in \eqref{eq: reg} implies the upper-uniformity hypothesis of
\cref{lem:simultaneous-partite-sparse-regularity}, after decreasing $\eta$ if
necessary.

Fix an arbitrary $r$-colouring of $E(\Gamma)$, and let
$G_1,\ldots,G_r$ be the colour graphs.  Applying
\cref{lem:simultaneous-partite-sparse-regularity}, we obtain equal-sized
non-exceptional clusters
$U_i^a$, $i\in[q]$, $a\in[t]$, each of common size
$m$ with $\mu_{\rm reg}M\le m\le2\mu_{\rm reg}M$, such that all but a
$\zeta_0/4$-proportion of
cross-cluster pairs are $(\varepsilon_{\rm reg},p)$-regular in every colour.
In particular, every cluster and every subset of a cluster of relative size at
least $\varepsilon_{\rm reg}$ has size at least $\nu M$, so the large-set
uniformity hypothesis \eqref{eq: reg} applies to all sets that occur in the
regularity test.

Form a reduced graph $R$ whose vertex classes are
$\mathcal U_i=\{U_i^1,\ldots,U_i^t\}$ with $i\in[q]$.
Thus the classes of $R$ have equal size $t$.  Join two clusters
$U_i^a\in\mathcal U_i$ and $U_j^b\in\mathcal U_j$, $i\ne j$, if their
pair is $(\varepsilon_{\rm reg},p)$-regular in each colour graph.  The
regularity conclusion above shows that $R$ misses at most a
$\zeta_0/4$-proportion of all cross-cluster pairs, and hence at most a
$\zeta_0$-proportion after harmlessly increasing the error to account for the
notation of \cref{fact:dense-multipartite-ramsey-selection}.

We next colour the edges of $R$.  If $U_i^aU_j^b\in E(R)$, then, by the
lower bound in \eqref{eq: reg},
$e_\Gamma(U_i^a,U_j^b)
        \ge (1-\eta)p|U_i^a||U_j^b|$.
Since the $r$ colour graphs partition these ambient edges, some colour
$s=s(i,a,j,b)$ has
$$
        d_{G_s,p}(U_i^a,U_j^b)
        \ge \frac{1-\eta}{r}
        \ge \frac{1-2\eta}{r}.
$$
Assign one such colour to the reduced edge.

By \cref{fact:dense-multipartite-ramsey-selection}, the coloured reduced graph
contains clusters $W_1,\ldots,W_k$, lying in distinct original parts, and a
colour $s\in[r]$, such that every pair $(W_i,W_j)$ is an edge of $R$ of
colour $s$.  Hence each pair $(W_i,W_j)$ is
$(\varepsilon_{\rm reg},p)$-regular in $G_s$ and has
$p$-density at least $(1-2\eta)/r$ in $G_s$.

Set $V_i=W_i$ for $i\in[k]$, and put $\mu=\mu_{\rm reg}$.  Then the
common cluster-size bounds give
$\mu M\le |V_i|\le2\mu M$.  Finally, let
$X\subseteq V_i$ and $Y\subseteq V_j$ satisfy
$|X|\ge\varepsilon_0|V_i|$ and $|Y|\ge\varepsilon_0|V_j|$.  Since
$\varepsilon_{\rm reg}\le\varepsilon_0$, the
$(\varepsilon_{\rm reg},p)$-regularity of $(V_i,V_j)$ in $G_s$ gives
$$
        d_{G_s,p}(X,Y)
        \ge d_{G_s,p}(V_i,V_j)-\varepsilon_{\rm reg}
        \ge \frac{1-2\eta}{r}-\varepsilon_{\rm reg}
        \ge d_0-\varepsilon_0,
$$
where the last inequality is \eqref{eq:colour-density-to-lower}.  This is
exactly $(\varepsilon_0,d_0,p)$-lower-regularity.  The lemma follows.
\end{proof}

We also need the following elementary random lower-uniformity estimate.

\begin{fact}[Large-set lower-uniformity]
\label{fact:large-set-lower-uniformity}
Fix $q\ge1$, $0<\nu,\eta<1$.  There is $L>0$ such that the following
holds.  Let $\Gamma$ be the random $q$-partite graph with parts
$U_1,\ldots,U_q$, each of size $M$, and cross-edge probability $p$. If $pM\ge L\log M$, 
then with probability tending to one, for every pair of distinct parts
$U_a,U_b$ and every $X\subseteq U_a$, $Y\subseteq U_b$ with
$|X|,|Y|\ge\nu M$, we have
$$
        (1-\eta)p|X||Y|
        \le e_\Gamma(X,Y)
        \le (1+\eta)p|X||Y|.
$$
\end{fact}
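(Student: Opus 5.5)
This is a Chernoff bound on $e_\Gamma(X,Y)$ for each fixed pair of sets, followed by a union bound over all pairs of parts and all large sets. We work with fixed $q,\nu,\eta$ and take $L$ large in terms of them.

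Fix distinct parts $U_a,U_b$ and sets $X\subseteq U_a$, $Y\subseteq U_b$ with $|X|,|Y|\ge\nu M$. Since $U_a$ and $U_b$ are distinct parts, the $|X||Y|$ potential cross-edges are independent. Hence $e_\Gamma(X,Y)$ is a binomial random variable with mean $\mu=p|X||Y|\ge p\nu^2M^2$. The two-sided Chernoff bound gives
$$\Prb\bigl(|e_\Gamma(X,Y)-\mu|>\eta\mu\bigr)\le 2\exp\left(-\frac{\eta^2\mu}{3}\right)\le 2\exp\left(-\frac{\eta^2\nu^2}{3}\,pM^2\right).$$

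For the union bound, the number of choices is at most $q^2$ for the ordered pair of parts and at most $2^M\cdot 2^M=4^M$ for the sets $X,Y$. The total failure probability is therefore at most
$$2q^2\,4^M\exp\left(-\frac{\eta^2\nu^2}{3}\,pM\cdot M\right).$$
The hypothesis $pM\ge L\log M$ makes the exponent at least $(\eta^2\nu^2L/3)\,M\log M$. Once $L$ is large in terms of $\eta,\nu$, this dominates $M\log 4+\log(2q^2)$ and tends to $-\infty$. So the probability tends to zero, and the stated two-sided bound holds simultaneously for all eligible $(X,Y)$ with probability tending to one.

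There is no real obstacle here. The only point to check is that the crude entropy count $4^M$ is beaten by the Chernoff exponent. This holds because the exponent is of order $pM^2\ge LM\log M$, which is much larger than $M$. In fact any $pM\to\infty$ would suffice, so the $\log M$ factor gives room to spare.
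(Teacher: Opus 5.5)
Your proposal is correct and follows the paper's proof essentially verbatim: a two-sided Chernoff bound for the binomial variable $e_\Gamma(X,Y)$, whose mean is at least $\nu^2pM^2$, followed by a union bound over the at most $4^M$ choices of $(X,Y)$ and the pairs of parts. Your explicit exponent $\eta^2\nu^2pM^2/3\ge(\eta^2\nu^2L/3)M\log M$ simply makes precise the paper's remark that the Chernoff exponent dominates the entropy, and your observation that $pM\to\infty$ already suffices is also correct.
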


\begin{proof}
For fixed $X,Y$, the random variable $e_\Gamma(X,Y)$ is binomial with mean
$p|X||Y|$.  Chernoff's bound gives
$\Prb\left(
        |e_\Gamma(X,Y)-p|X||Y||>\eta p|X||Y|
        \right)
        \le
        2\exp(-c_\eta p|X||Y|)$. 
If $|X|,|Y|\ge\nu M$, then $p|X||Y|\ge \nu^2 pM^2$.  The number of choices
of $X,Y$ is at most $4^M$.  Taking $L$ sufficiently large makes the
Chernoff exponent dominate this entropy, and a union bound over all pairs of
parts proves the claim.
\end{proof}

We now combine the random verification with the colour-focusing lemma.

\begin{proposition}[A random Ramsey-useful host]
\label{prop:random-ramsey-useful-host}
Fix $\Delta,r\ge2$.  There are constants $A,C_0,c_*>0$ and an integer $q$
such that the following holds. Let $n$ be sufficiently large, set
$k=\Delta+1$,
$p=\frac{c_*}{n\log n}$, and $M=C_0 n^{\Delta+2}(\log n)^A$.
Let $\Gamma$ be the random $q$-partite graph with
parts $U_1,\ldots,U_q$, each of size $M$, and cross-edge probability $p$. Then, the following property holds almost surely.  For every
$r$-colouring of $E(\Gamma)$, there are subsets
$V_1,\ldots,V_k$ lying in distinct parts of $\Gamma$, and a colour $s\in[r]$, such that if
$G$ is the colour-$s$ graph induced on $V_1\cup\cdots\cup V_k$, then
$G\subseteq \Gamma[V_1\cup\cdots\cup V_k]$ satisfies all host-side
hypotheses of \cref{thm:conflict-stable-embedding}, with constants depending
only on $\Delta$ and $r$, for every $n$-vertex graph $H$ with
$\Delta(H)\le\Delta$ and an equitable proper $k$-colouring.
\end{proposition}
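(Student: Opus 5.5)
The plan is to fix all constants in the right order and then intersect three high-probability events. Put $k=\Delta+1$. First apply \cref{thm:conflict-stable-embedding} with $\Delta$, $k$, $d=d_0$ (to be fixed below), $\kappa=2$ and $C=C_{\rm good}$, where $C_{\rm good}\ge C_*(\Delta)$ comes from \cref{prop:random-verification}. This yields $\theta,\xi_0,\lambda_0,K_0,n_0$ and the vectors $\boldsymbol\varepsilon,\boldsymbol\eta$. We then shrink the entries of $\boldsymbol\varepsilon$ if necessary so that $\varepsilon_h<\varepsilon_{\rm ABHKP}(\eta_h,d_0)$ and $\varepsilon_h<d_0/2$, keeping the hierarchy \eqref{eq:epsilon-hierarchy}. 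We take $d_0=1/(8r)$, and apply \cref{lem:sparse-colour-focusing} with $k$, $r$, $d_0$ and input parameter $\varepsilon_0$ (the first entry of $\boldsymbol\varepsilon$). This gives $q,\mu,\nu,\eta$.

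The host is the random $q$-partite graph with parts of size $M$. Three events are needed.
\begin{enumerate}
\item[(a)] The large-set uniformity of \cref{fact:large-set-lower-uniformity} with $\nu,\eta$. This needs only $pM\ge L\log M$, which is trivial here.
\item[(b)] Conflict-goodness of every induced subgraph on a subcollection of parts, at scale $M$, with $\xi=L\log M/K$. This is \cref{prop:random-verification}(i).
\item[(c)] Envelope-positive inheritance of every $G\subseteq\Gamma$, which is \cref{prop:random-verification}(ii).
\end{enumerate}
By the remark after \cref{prop:random-verification}, the condition $p^2K\ge L\log M$ holds once $A\ge\Delta+3$ and $C_0$ is large. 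All three events hold a.a.s., and hence so does their intersection. Given any colouring, event (a) lets us apply \cref{lem:sparse-colour-focusing}. This produces $V_1,\dots,V_k$ in distinct parts with $\mu M\le|V_i|\le2\mu M$, and a colour $s$ such that every cross-pair is $(\varepsilon_0,d_0,p)$-lower-regular in $G$. That is hypothesis (ii) of \cref{thm:conflict-stable-embedding}.

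One point needs care: the theorem is applied at scale $M'=\mu M$ with $\kappa=2$, while the random verification was carried out at scale $M$. I would handle this by running \cref{prop:random-verification} directly at scale $M'$ on the subsets. Candidate-scale sets relative to $V_i$ have size at least $\theta p^{|W|}\mu M\ge\theta\mu K$. So it suffices to verify the edge-load and inheritance statements for all sets of size at least $\theta\mu K$ in the parts $U_a$. This is exactly what the proof of \cref{prop:random-verification} does, with $\theta$ replaced by $\theta\mu$. Both statements are uniform over all subsets and all subgraphs, so they hold simultaneously for every possible $V_i$ and $G$. The weighted-overlap bound follows as before from $1/(\theta\mu K)\le\xi$.

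The remaining step is to check the numerical conditions of \cref{thm:conflict-stable-embedding}.
\begin{itemize}
\item $K'=p^\Delta\mu M\asymp C_0c_*^\Delta n^2(\log n)^{A-\Delta}\ge K_0\log(2n)$. This is clear.
\item $\xi=L\log M/K\le\min\{\xi_0,p\}$. This needs $K\gg p^{-1}\log M\asymp n(\log n)^2$, which holds.
\item $(p+\xi)n\log(2n)\le\lambda_0$. This is the main obstacle. Since $\xi\le p$, it reduces to $2pn\log(2n)=2c_*\log(2n)/\log n\le\lambda_0$. This forces choosing $c_*$ small in terms of $\lambda_0$, and hence in terms of $\Delta,r$ only, and choosing $c_*$ before $C_0$.
\end{itemize}
Finally, for an equitable proper $k$-colouring of an $n$-vertex $H$ we have $|\chi^{-1}(i)|\le\lceil n/k\rceil\le\mu M$, and a separated buffer exists by \cref{lem:balanced-separated-buffer}. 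Therefore all hypotheses hold, with constants depending only on $\Delta$ and $r$.
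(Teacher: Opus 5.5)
Your proposal is correct and follows the paper's proof essentially step for step. It fixes the constants in the same order (the embedding theorem with $\kappa=2$ and the hierarchy additionally forced below $\varepsilon_{\rm ABHKP}(\eta_h,d_0)$, then colour-focusing with $\varepsilon_0$, then $c_*$ before $C_0$), uses the same three a.a.s.\ events, passes from scale $M$ to $M'=\mu M$ by running the random verification with candidate-scale parameter $\mu\theta$, and makes the same numerical checks. The only differences are cosmetic: you take $d_0=1/(8r)$ instead of $1/(100r)$, and you absorb the factor $\mu^{-1}$ into $L$ rather than writing $\xi'=\mu^{-1}\xi$.
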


\begin{proof}
Let $k=\Delta+1$, and choose $d_0=1/(100r)$.  Choose a fixed
conflict-good constant $C_{\rm emb}$ at least as large as the absolute
constant $C_*$ in \cref{prop:random-verification}.

Let
$\boldsymbol\varepsilon=(\varepsilon_0,\ldots,\varepsilon_{2\Delta})$,
        $\boldsymbol\eta=(\eta_0,\ldots,\eta_{2\Delta-1}),
$
and let $\theta,\xi_0,\lambda_0,K_0$ be the constants supplied by
\cref{thm:conflict-stable-embedding} with $d=d_0$, $C=C_{\rm emb}$, and
$\kappa=2$.  Inspecting the choice of constants in the proof of
\cref{thm:conflict-stable-embedding}, we may and do take the hierarchy so that,
in addition,
$\varepsilon_h<\varepsilon_{\rm ABHKP}(\eta_h,d_0)$ for every $0\le h<2\Delta$.
Choose $0<c_*\le1$ sufficiently small so that, for all sufficiently large
$n$,
$\frac{c_*\log(2n)}{\log n}
        \le \frac{\lambda_0}{4}$.
We shall take $p=c_*/(n\log n)$.  Finally choose $A\ge\Delta+3$, and then
choose $C_0$ sufficiently large, depending on all previously fixed constants,
including $c_*$.

Apply \cref{lem:sparse-colour-focusing} with this $k,r,d_0$, and with the
first regularity parameter $\varepsilon_0$.  This
gives constants $q,\mu,\nu,\eta$.  We expose the random $q$-partite graph
$\Gamma$.  By \cref{fact:large-set-lower-uniformity}, with high probability
$\Gamma$ satisfies the large-set uniformity condition (\ref{eq: reg}).  By
\cref{prop:random-verification}, applied with candidate-scale parameter
$\mu\theta$, the prescribed constant $C_{\rm emb}$, and the above
regularity hierarchies, with high probability $\Gamma$ also satisfies the
conflict-good and envelope-positive-inheritance conclusions needed below.  Here and below we
increase the constant $L$ in the definition of $\xi$ if necessary; this only
changes constants depending on $\Delta$ and $r$.

Fix an arbitrary $r$-colouring of $E(\Gamma)$.  By
\cref{lem:sparse-colour-focusing}, there are distinct original parts
$U_{a_1},\ldots,U_{a_k}$, subsets
$V_i\subseteq U_{a_i}$, $\mu M\le |V_i|\le2\mu M$, 
and a colour $s$, such that every pair $(V_i,V_j)$ is
$(\varepsilon_0,d_0,p)$-lower-regular in the colour-$s$ graph $G$.

We now pass the ambient estimates from the original parts $U_a$ to the
positive-density subparts $V_i$.  Put
$M'=\mu M$, $K'=p^\Delta M'=\mu K$, and $\xi'=\mu^{-1}\xi$.
Then $M'\le |V_i|\le2M'$.  Moreover, every
$(\Delta,\theta)$-candidate-scale set inside $V_i$, measured at scale $M'$,
is a $(\Delta,\mu\theta)$-candidate-scale set inside the original part
$U_{a_i}$, measured at scale $M$.  Consequently the edge-load estimate, the
weighted-overlap estimate, and the envelope-positive-inheritance conclusions
hold on
$\Gamma'=\Gamma[V_1\cup\cdots\cup V_k]$
at scale $M'$, with parameter $\xi'$ and with conflict-good constant
$C_{\rm emb}$.

It remains only to check the numerical hypotheses of
\cref{thm:conflict-stable-embedding}.  With
$p=c_*/(n\log n)$, and $M=C_0 n^{\Delta+2}(\log n)^A$, we have
$K'=p^\Delta M'
        =
        \Theta_{\Delta,r}\bigl(
        C_0c_*^\Delta n^2(\log n)^{A-\Delta}
        \bigr)$.
Taking $A\ge \Delta+3$ and then $C_0$ sufficiently large gives
$K'\ge K_0\log(2n)$.  The stronger condition required in
\cref{prop:random-verification} is
$K=p^\Delta M\ge Lp^{-2}\log M$,
or equivalently $p^2K\ge L\log M$.  But
$p^2K
        =
        \Theta_{\Delta,r}\bigl(
        C_0c_*^{\Delta+2}(\log n)^{A-\Delta-2}
        \bigr)$,
so, with $A\ge\Delta+3$, this also follows after increasing $C_0$.

Moreover,
$$
        p\,n\log(2n)
        =
        c_*\frac{\log(2n)}{\log n}
        \le \frac{\lambda_0}{4}
$$
for all sufficiently large $n$, by the choice of $c_*$.  The corresponding
value of $\xi' n\log(2n)$ tends to zero for every $A>\Delta+2$, and
$\xi'\le p$ for all sufficiently large $n$.  Hence, for all sufficiently
large $n$,
$0<\xi'\le\min\{\xi_0,p\}$, and $(p+\xi')n\log(2n)\le\lambda_0 $.

Since $M'\gg n$, we also have
$|\chi^{-1}(i)|\le |V_i|$ for every equitable $k$-colouring of every
$n$-vertex target graph.  All host-side hypotheses of
\cref{thm:conflict-stable-embedding} are therefore satisfied on the
$k$-partite graph $\Gamma'$, with the colour-$s$ graph $G$ as the
positive graph, using the scale $M'$, parameter $\xi'$, and conflict-good
constant $C_{\rm emb}$.
\end{proof}
We recall the Hajnal--Szemerédi theorem in the form needed below.

\begin{theorem}[Hajnal--Szemerédi~\cite{HS}]
\label{thm:hajnal-szemeredi}
Every graph $H$ with maximum degree at most $\Delta$ has an equitable
proper colouring with $\Delta+1$ colours.
\end{theorem}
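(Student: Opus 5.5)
Since \cref{thm:hajnal-szemeredi} is a classical result quoted from \cite{HS}, the proof would be cited rather than redone. If a self-contained argument were wanted, I would follow the short proof of Kierstead and Kostochka. Set $k=\Delta+1$ and pad $H$ with isolated vertices so that $|V(H)|=ks$. An equitable colouring is then a partition into $k$ independent classes of size exactly $s$, and removing the padding vertices leaves an equitable colouring of the original graph. I would argue by induction on $|E(H)|$. The edgeless case is trivial: split $V(H)$ into $k$ classes of size $s$.

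For the induction step, delete an edge $xy$ and take an equitable $k$-colouring of $H-xy$. If $x$ and $y$ get different colours we are done, so suppose both lie in a class $V$. Besides $y\in V$, the vertex $x$ has at most $\Delta-1$ neighbours, so some class $W\ne V$ contains no neighbour of $x$. Moving $x$ to $W$ gives a proper \emph{nearly equitable} colouring of $H$: one class $V^-$ of size $s-1$, one class $V^+$ of size $s+1$, and all other classes of size $s$. The rest of the proof turns a nearly equitable colouring into an equitable one.

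To do this, define an auxiliary digraph on the colour classes, with $X\to Y$ whenever some vertex of $X$ has no neighbour in $Y$ (such a vertex is \emph{movable} to $Y$). If $V^+$ reaches $V^-$ by a directed path, shift one movable vertex along each arc of the path; all classes then have size $s$ and we are done. Otherwise let $\mathcal A$ be the set of classes that reach $V^-$ (so $V^-\in\mathcal A$ and $V^+\notin\mathcal A$), let $\mathcal B$ be the remaining classes, and write $a=|\mathcal A|$, $b=|\mathcal B|$. By construction every vertex of a class in $\mathcal B$ has at least one neighbour in every class of $\mathcal A$. Counting edges between $\bigcup\mathcal A$ and $\bigcup\mathcal B$ against the degree bound $\Delta=a+b-1$ then shows that vertices in $\mathcal A$ typically have few neighbours in $\mathcal B$.

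The main obstacle is this last configuration. My plan is a secondary induction on $b$. Pick a class $Z\in\mathcal A$ that is \emph{terminal}, meaning every other class of $\mathcal A$ still reaches $V^-$ with $Z$ removed. Call $z\in Z$ \emph{solo} if $z$ is the unique neighbour in $Z$ of some $w$ in a $\mathcal B$-class. The edge count should force some solo $z$ that is movable to some class $X\in\mathcal A\setminus\{Z\}$, or a $\mathcal B$-vertex adjacent only to solo vertices. In that case, swap $w$ into $Z$ and push $z$ out along $\mathcal A$ toward $V^-$. This either completes the equitable colouring or strictly enlarges $\mathcal A$, and the induction on $b$ then applies. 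Getting this counting argument exactly right, including the choice of terminal class and the solo-vertex bookkeeping, is where I expect the real work, and I would follow the Kierstead--Kostochka counting closely there.
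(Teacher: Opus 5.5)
Citing \cite{HS} is exactly what the paper does. The theorem is quoted with a reference and no proof is given, so as a plan for this statement your proposal matches the paper and nothing more is required.

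Your optional self-contained sketch does contain a step that is false as written. You claim that deleting the padding vertices leaves an equitable colouring, but this fails when the padding vertices are \emph{isolated}, because an equitable colouring of the padded graph may put two of them in the same class. For instance, take $\Delta=2$ and $H$ the path $abcd$, and add isolated vertices $u,v$. Then $\{u,v\},\{a,c\},\{b,d\}$ is an equitable $3$-colouring of the padded graph, but deleting $u,v$ leaves classes of sizes $0,2,2$. The standard fix is to pad with a clique $K_p$ with $p\le\Delta$. Its vertices have degree $p-1<\Delta$ and must receive distinct colours, so removing them deletes at most one vertex from each class.

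The endgame is also not yet an argument, and it partly misdescribes the mechanism.
\begin{itemize}
\item The move ``swap $w$ into $Z$ and push $z$ along $\mathcal A$ toward $V^-$'' is only available when $z$ is movable to a class of $\mathcal A\setminus\{Z\}$.
\item This move does not produce a colouring of $G$ with a larger $\mathcal A$. It makes the $a$ classes on $A\cup\{w\}$ all of size $s$ and moves the deficit to the class $Y$ of $w$; if $Y=V^+$ you are done.
\item Otherwise you apply the induction to $G[B\setminus\{w\}]$. This graph has maximum degree at most $b-1$, since every vertex of $B$ has at least $a$ neighbours in $A$, and it carries a nearly equitable $b$-colouring. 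This smaller instance is what the induction on $b$ is really applied to.
\item In your second alternative, no solo vertex of a terminal class is movable inside $\mathcal A$, so this move is unavailable and a genuinely different exchange is needed.
\end{itemize}
That case, together with the counting that guarantees one of the two configurations, is exactly what the sketch leaves open.
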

We can now prove the main induced Ramsey bound.

\begin{proof}[Proof of \cref{thm:main}]
Let $k=\Delta+1$.  By \cref{thm:hajnal-szemeredi}, $H$ has an equitable proper $k$-colouring $\chi:V(H)\to[k]$.
By \cref{lem:balanced-separated-buffer}, for all sufficiently large $n$, the
graph $H$ has an $\alpha$-balanced separated buffer with respect to
$\chi$.  The bounded values of $n$ can be absorbed into the final constant.

Let $\Gamma$ be a graph satisfying the conclusion of
\cref{prop:random-ramsey-useful-host}; such a graph exists because the
probability there tends to one.  Consider an arbitrary $r$-colouring of
$E(\Gamma)$.  By \cref{prop:random-ramsey-useful-host}, there are
$k$ parts $V_1,\ldots,V_k$ and a colour $s$ such that the colour-$s$
graph $G$ and the ambient graph
$\Gamma'=\Gamma[V_1\cup\cdots\cup V_k]
$
satisfy all host-side hypotheses of \cref{thm:conflict-stable-embedding}.  The
part sizes are much larger than $n$, so
$|\chi^{-1}(i)|\le |V_i|$ for every $i$.  Together with the separated
buffer chosen above, all hypotheses of the embedding theorem are satisfied.
Applying that theorem gives an acceptable embedding 
$\phi:V(H)\to V(\Gamma')$. Thus the positive edges of the copy all have colour $s$.  Since $\Gamma'$ is
the induced subgraph of $\Gamma$ on $V_1\cup\cdots\cup V_k$, the copy is
also induced in $\Gamma$. Hence every $r$-colouring of $E(\Gamma)$ contains a monochromatic copy of
$H$ which is induced in $\Gamma$.  
Finally,
$|V(\Gamma)|=qM
        \le C n^{\Delta+2}(\log n)^A,
$
where $q$ depends only on $\Delta$ and $r$.  This proves the claimed
bound.
\end{proof}

\end{document}